\numberwithin{equation}{section}
\newtheorem{thm}[equation]{Theorem}
\newtheorem{lemma}[equation]{Lemma}
\newtheorem{prop}[equation]{Proposition}
\newtheorem{cor}[equation]{Corollary}
\newtheorem{rmk}[equation]{Remark}
\newcommand{\Z}[0]{\mathbb{Z}}
\newcommand{\N}[0]{\mathbb{N}}
\newcommand{\p}[0]{\mathbb{P}}
\newcommand{\F}[0]{\mathbb{F}}
\newcommand{\LL}[0]{\mathscr{L}}
\newcommand{\OO}[0]{\mathcal{O}}
\newcommand{\RR}[0]{\mathscr{R}}
\begin{document}

\title{A new proof of the Alexander-Hirschowitz interpolation theorem}
\author{Elisa Postinghel}

 \address{Dipartimento di Matematica, Universit\`a Roma Tre \\L.go S. L. Murialdo 1,
00146- Roma, Italy}
\email{ postingh@mat.uniroma3.it}

\maketitle

\begin{abstract} 
The classical polynomial interpolation problem in several variables can be generalized to the case of points with greater multiplicities. What is known, as yet, is essentially concentrated in the Alexander-Hirschowitz Theorem which says that a general collection of double points in $\mathbb{P}^r$ gives independent conditions on the linear system $\mathcal{L}$ of the hypersurfaces of degree $d$, with a well known list of exceptions. We  present a new proof of this theorem which consists in performing degenerations of $\p^r$ and analyzing how $\mathcal{L}$ degenerates.\\
\ \\
 AMS Subject Classification: 14C20, 14D06, 14N05\\
 Key words: degenerations, polynomial interpolation, linear systems, double points.
 \end{abstract}


\section*{Introduction}

Fix $p_1,\dots,p_n \in \p^r$ distinct points  and fix $m_1,\dots,m_n$ positive integers. Define $\LL_{r,d}$ to be the linear system of hypersurfaces of $\p^r$ of degree $d$ and consider 
$$
\LL:=\LL_{r,d}(m_1,\dots,m_n)
$$
 the sub-linear system  of those divisors of $\LL_{r,d}$ having multiplicity at least $m_i$ at $p_i$, $i=1,\dots,n$. 
 Its \emph{virtual dimension} is defined to be
$$
v(\LL):={{r+d}\choose r}-1-\sum_{i=1}^{n}{{r+m_i-1}\choose r},
$$
i.e. the dimension of $\LL_{r,d}$ minus the number of conditions imposed by the multiple points. The actual dimesion of $\LL$ cannot be less than $-1$, hence we define the \emph{expected dimension} to be
$$
e(\LL):=\max \{v(\LL), -1\}.
$$
If the conditions imposed by the assigned points are not linearly independent, the actual dimension of $\LL$ is greater that the expected one: in that case we say that $\LL$ is \emph{special}. Otherwise, if the actual and the expected dimension coincide, we say that $\LL$ is \emph{non-special}.\\
The \emph{dimensionality problem} consists in investigating, given a linear system $\LL$, if it is non-special. The dimension of $\LL$ is \emph{upper-semicontinuous} in the position of the points in $\p^r$; it achieves its minimum value when they are in \emph{general position}. 
Let $Z$ be a scheme of lenght $\sum_{i=1}^n {r+m_i-1 \choose r}$ given by $n$ fat points in general position and consider the following restriction exact sequence
$$
0\rightarrow \LL=\LL_{r,d}(m_1,\dots,m_n)\rightarrow \LL_{r,d}\rightarrow{\LL_{r,d}}_{|Z}.
$$
In cohomology we get
$$
0\rightarrow H^0(\p^r,\LL)\rightarrow H^0(\p^r,\LL_{r,d})\rightarrow  H^0(Z,{\LL_{r,d}}_{|Z})\rightarrow H^1(\p^r,\LL)\rightarrow 0,
$$
being $h^1(\p^r,\LL_{r,d})=0$. Thus $\LL$ is non-special if and only if 
$
h^0(\p^r,\LL)\cdot h^1(\p^r,\LL) = 0.
$

For $r=1$ and general points, the system $\LL_{1,d}(m_1,\dots,n_n)$ is always non-special. Furthermore, if all points have multiplicity one the system $\LL_{r,d}(1^n)$ is also non-special.
 However, the problem becomes more and more complicated in several variables and higher multiplicities.
What is known is  Theroem \ref{A-H}, a result due to J. Alexander and A. Hirschowitz. They classify the special cases for $r\geq2$ and $m_1=\cdots=m_n=2$. 
\begin{thm}[Alexander-Hirschowitz]\label{A-H}
The linear system $\LL_{r,d}(2^n)$ is non-special except in the following cases:
$$
\begin{tabular}{c|cccccc}
r  &  $\forall$ & 2 &  3 &  4 &  4 \\
\hline 
d & 2 & 4 & 4 & 4 & 3 \\
\hline 
n & $\leq$ r & 5 & 9 & 14 & 7
\end{tabular}
$$
\end{thm}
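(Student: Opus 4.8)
The plan is to prove the Alexander--Hirschowitz theorem by the technique of \emph{degeneration of the ambient space}, following the circle of ideas initiated by Ciliberto--Miranda. The strategy breaks the problem into two complementary parts: first, exhibit the listed cases as genuinely special (the easy direction, handled by explicit geometry --- the quadric case $d=2$ comes from the fact that $n\le r$ double points impose dependent conditions because a quadric singular at $n$ independent points contains their span; the four sporadic cases correspond to classically known superabundant systems, e.g. the Veronese surface, the quadro-quadric Cremona-type phenomena), and second, prove non-speciality everywhere else. For the second and substantial part, one sets up a sequence of reductions: by a Castelnuovo-type inequality or the standard double-point-to-\emph{vanishing} argument it suffices, for each fixed $r$, to check a finite list of ``small'' degrees $d$ and then propagate; so the real content is an inductive step in $(r,d,n)$.

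The inductive engine is a toric degeneration of $\p^r$. One specializes $\p^r$ to a union $X_0=\p^r_1\cup_{\p^{r-1}}\p^r_2$ of two copies of $\p^r$ glued along a hyperplane, realized as the central fibre of a flat family $X\to\Delta$ over a disk obtained by blowing up $\p^r\times\Delta$ along a hyperplane in the central fibre. The line bundle $\LL_{r,d}$ degenerates to bundles $\LL_1,\LL_2$ of degrees $d_1,d_2$ on the two components with $d_1+d_2=d$ (after absorbing the exceptional divisor), and one distributes the $n$ general double points among the two components, with some points landing on the double locus $\p^{r-1}$. The key tool is the exact sequence relating $H^0$ on $X_0$ to the sections on the two components that agree on $\p^{r-1}$ (a Mayer--Vietoris / transversality-of-sections sequence), together with semicontinuity $h^0(X_t,\LL_t)\le h^0(X_0,\LL_0)$: if the degenerate limit system has the expected (small) dimension, so does the general one. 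Thus non-speciality on $X_0$ --- which reduces to non-speciality of two systems of the form $\LL_{r,d_i}$ with multiple points, one living on each component and a residual system on the $\p^{r-1}$ double locus --- forces non-speciality on $\p^r$. Choosing the numerology $(d_1,d_2)$ and the split of the points cleverly is what makes the induction on $d$ (and a parallel, interleaved induction on $r$ via restriction to the exceptional $\p^{r-1}$) close up.

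Concretely the double induction runs as follows. First dispose of base cases: $r=1$ and $m_i=1$ are non-special by the facts quoted in the Introduction; for each small $r$ the finitely many small-$d$ systems are checked by hand or reduced to known surface/threefold results. Then, assuming the theorem for all smaller $r$ and, for the given $r$, all smaller $d$, one runs the degeneration: one must verify that the two ``half'' systems $\LL_{r,d_1}(2^{n_1},\dots)$ and $\LL_{r,d_2}(2^{n_2},\dots)$ fall under the inductive hypothesis (their degrees being $<d$), that the residual system on $\p^{r-1}$ obtained by restriction is covered by the hypothesis for $r-1$, and that the three dimension counts add up correctly so that the Mayer--Vietoris sequence is exact in the right spot and yields $h^0(X_0,\LL_0)=e(\LL)$ --- i.e. one has to rule out unwanted $H^1$'s coming from the gluing, which is where the arithmetic of how many double points are placed on the double locus $\p^{r-1}$, and with what ``tangency'' conditions, must be tuned. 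When the naive split would leave a system with an unexpected section one performs, in addition, a \emph{specialization of points onto the double locus} or a further toric degeneration, iterating the procedure; a finite descent (on $d$, ultimately down to the base cases) terminates the argument.

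The main obstacle, as always in this circle of ideas, is the \textbf{bookkeeping at the double locus}: after degenerating, the limit linear system is not simply a direct sum, and controlling the ``kernel'' and ``cokernel'' of the restriction-to-$\p^{r-1}$ map --- i.e. making sure the points pushed onto $\p^{r-1}$ impose independent conditions on \emph{that} system with the induced multiplicities, and that no component of the degenerate system acquires excess sections --- requires a delicate, case-by-case choice of the degeneration parameters and of the distribution of the $n$ points. In particular the four sporadic exceptions must reappear \emph{exactly} as the finitely many numerical coincidences where no good degeneration exists; showing that these are the only obstructions --- that in every other case some admissible choice of $(d_1,d_2)$ and point-split makes the induction go through --- is the crux of the proof and the step I expect to occupy the bulk of the work.
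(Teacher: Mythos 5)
Your overall strategy --- degenerate $\p^r\times\Delta$ so that the central fibre becomes reducible, distribute the $n$ double points between the two components, compute the limit system via a gluing sequence over the double locus, and conclude by semicontinuity --- is exactly the approach of the paper. But what you have written is a programme, not a proof, and the pieces you defer are precisely where the content lies. First, the numerology is not what you describe: blowing up a hyperplane (equivalently, a point) in the central fibre does not yield two copies of $\p^r$ carrying degrees with $d_1+d_2=d$; one component is $\p^r$ blown up at a point, and after twisting by $-(d-1)$ times the exceptional component the two limit systems are $\LL_{r,d-1}(2^{n-b})$ and $\LL_{r,d}(d-1,2^{b})$. The second of these carries a base point of multiplicity $d-1$, so it is \emph{not} covered by your inductive hypothesis on systems of double points; establishing non-speciality of $\LL_{r,d}(d-1,2^{k})$ for $k$ up to an explicit bound is a separate double induction on $r$ and $d$ that occupies a whole preliminary section of the paper and that your sketch does not anticipate. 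Second, transversality of the two restricted systems on the double locus $R\cong\p^{r-1}$ cannot simply be ``tuned'': the paper gets it by arranging that one of the two systems cuts the complete series on $R$, and when the arithmetic does not allow an integral choice of $b$ one needs a second degeneration pushing $\beta$ of the nodes onto $R$ itself, with a dedicated argument that these impose independent conditions on the restricted system.

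Third, and most seriously, your uniform scheme collapses exactly at the base of the induction on $d$: for $d=3$ (and essentially for $d=4$) the system induced on the exceptional component is a system of quadrics with nodes, which is always special, so the inductive step fails there. The paper must treat cubics by an entirely different degeneration --- blowing up a codimension-three linear subspace $L$, so that the exceptional component is a $\p^3$-bundle over $L$, the kernel there becomes a system of cubics in $\p^{r-3}$, and the induction runs from $r-3$ to $r$ with explicit computations at $r=5,6,7$ --- and quartics by a separate induction on $r$ exploiting the lines joining pairs of nodes. Until you specify degenerations that actually close at low degree and supply the auxiliary non-speciality results for the systems with a $(d-1)$-fold base point, the argument has genuine gaps rather than merely unwritten details.
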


A natural approach to the dimensionality problem of linear systems is via \emph{degenerations}. Degenerations allow  to move the multiple base points of the linear system in special position, arguing with a semicontinuity argument. More precisely, if one finds a specialization of the points, which is good in the sense that  the corresponding limit linear system is non-special, then also the original one is non-special. Computing the limit linear system is in general delicate. Hirchowitz in \cite{H} elaborated a degeneration technique, which he called \emph{la m\'ethode d'Horace}, consisting in making iterated specializations of as many points as convenient on a fixed hyperplane and then applying induction on  dimension and  degree. To be more explicit, let $\LL:=\LL_{r,d}(2^n)$ be the linear system of hypersurfaces of $\p^r$ of degree $d$ singular at a collection of $n$ general points; the main idea of  Hirschowitz was to degenerate in such a way that $h$ of the $n$ points have support on a fixed hyperplane $\pi\subseteq\p^r$; one gets the so called \emph{Castelnuovo exact sequence}:
$$
0 \rightarrow \LL_{r,d-1}(2^{n-h},1^h)\rightarrow\LL\rightarrow\LL_{r-1,d}(2^h),
$$
where the $h$ base points of the kernel system are the residual of the $h$ double points specialized on $\pi$.
Thus, arguing by induction, if the two external systems are non-special with virtual dimension at least $-1$, which means that one does not lose any condition in this restriction procedure, i.e. $h^1(\p^r,\LL_{r,d-1}(2^{n-h},1^h))=h^1(\pi,\LL_{r-1,d}(2^{n}))=0$, then the system $\LL$ is non-special too. Unfortunately, this method does not cover all possible situations. A refined version, the so called \emph{m\'ethode d'Horace diff\'erentielle}, gives a general solution exploiting subsequent specializations of part of the double base points of the linear system to a hyperplane $\pi$. The original proof, of about a hundred pages proposed by  Alexander and  Hirschowitz, is contained in \cite{A}-\cite{AH1}-\cite{AH2}-\cite{AH3} and  simplified in \cite{AH4}. 

In 2002, K. Chandler presented an easier proof of Theorem \ref{A-H} for $d\geq4$ in \cite{Ch1}. She  proposes a simplified version of the Horace's method using the \emph{Curvilinear Lemma} (Lemma 4).  In the case with degree three, the method does not work because specializing to hyperplanes one must deal with quadrics which give riso to special systems. Another problem with cubics is that each of the lines joining pairs of points lies in the base locus of the linear system, hence the standard approach can fail because these lines meet $\pi$. K. Chandler transformed the obstruction caused by the presence of lines in the base locus  in an advantage and completed the proof of Theorem \ref{A-H}, see \cite{Ch2}. The innovation was to specialize some of the points onto a subspace $L$ of codimension $2$ and pairs of points on hyperplanes containing $L$.

A recent improvement of this argument is due to M. C. Brambilla and G. Ottaviani. In a beautiful paper (\cite{BO}) they offer a shorter proof of Theorem \ref{A-H} in the case  $d\geq 4$ and propose a new and simpler degeneration argument in the cubic case. Their  argument is similar to that of  Chandler, but it is more effective. Their main idea is to choose a subspace $L$ of codimension three, instead of two, on which they specialize the points. This choice really simplifies the arithmetic side of the problem.

C. Ciliberto and R. Miranda in \cite{CM1} and \cite{CM2} used a different degeneration construction, originally proposed by Z. Ran (\cite{R}) to study higher multiplicity interpolation problem, in particular to prove Theorem \ref{A-H} in the planar case. This approach consists in degenerating the plane to a reducible surface, with two components intersecting along a line, and simultaneously degenerating the linear system $\LL=\LL_{2,d}(2^n)$ to a linear system $\LL_0$ obtained as fibered product of  linear systems on the two components over the restricted system on their intersection. The limit linear system $\LL_0$ is somewhat easier than the original one, in particular this degeneration argument allows to use induction either on the degree or on the number of imposed multiple points. This contruction provides a recursive formula for the dimension of $\LL_0$ involving the dimensions of the systems on the two components. 

In this paper we generalize this approach to the case with $r\geq3$ and we complete the proof of Theorem \ref{A-H} with this method, exploiting induction on both $d$ and $r$. In Section \ref{Ideg} we describe our construction: it consists in blowing up a point $p\in\p^r$ and twisting by an appropriate negative multiple of the exceptional divisor, obtaining a reducible central fiber which is the union of the exceptional divisor   and of the strict transform  of the blowing up of $\p^r$ at $p$ in the central fiber of a trivial family $\p^r\times\Delta$ over a disc $\Delta$ with a line bundle which restricts to $\mathcal{O}_{\p^r}(d)$ on any fiber. The two components intersect along a $(r-1)$-dimensional variety  that is isomorphic to $\p^{r-1}$. Then we consider $n$ general points on the general fiber, we specialize them   on the two components of the central fiber and we study the corresponding limit linear systems. This argument does not suffice to cover all the cases, because of an arithmetic obstruction similar to the one that  Brambilla and Ottaviani met. Our idea is to perform further degenerations in order to handle these cases; the interested reader can find the details  in Section \ref{IIdeg}.

A tricky point of this approach is the study of the transversality of the restrictions of the systems on the intersection of the two components. In the planar case,  Ciliberto and Miranda proved it using the finitness of the set of inflection points of linear systems on $\p^1$ (\cite{CM1}, Proposition 3.1). In higher dimension transversality is  more complicated. In Section \ref{transIdeg} and in Section \ref{transIIdeg} we present our approach to this problem: if at least one of the two restricted systems is a complete linear system, then the dimension of the intersection is easily computed.
 Anyhow, this is not sufficient to finish  the proof of Theorem \ref{A-H}. For istance, it does not work in the cubic case. The solution to this obstacle is to blow up a codimension three subspace $L$ of $\p^r$, instead of a point. This approach to the cubic case is not so different from the one of  Brambilla and  Ottaviani; we propose it in Section \ref{cubics}.  
 
Also the quartic case must be analysed separately. Indeed, twisting by a negative multiple of the exceptional component of the central fiber, we reduce to quadrics that are special.  We show Theorem \ref{A-H} for quartics in Section \ref{quartics} by induction on $r$, with a geometric argument that exploits the property of cubics of containing all lines through two distinct  double points.

Our construction besides its intrinsic intent (on the way we prove non-specility of some interesting systems, see Section \ref{LF})
gives hope for further extensions to greater multiplicities.

\section{The special cases }\label{specials}
In this section we briefly describe the special cases of Theroem \ref{A-H}.
The linear system $\LL=\LL_{r,2}(2^{n})$, with $2\leq n\leq r$ consists of quadric cones with vertex containing the double $(n-1)$-dimensional linear subspace of $\p^r$ determined by the $n$ points: hence
$\textrm{dim}(\LL)=  {{r-n+2}\choose 2}-1 >e({\LL}).$
For $n\geq r+1$, the system $\LL_{r,2}(2^n)$ is empty.

Let $n={{r+2}\choose2}-1$, for $r=2,3,4$. The linear system $\LL_{r,4}(2^{n})$ is expected to be empty. Nevertheless it is special because there exists a (unique) quartic singular at the given points, i.e. the double quadric through them.

Through a general collection of seven points in $\p^4$ there exists a rational normal curve of degree four;  
its secant variety is a cubic hypersurface which is singular along the whole curve and in particular at the seven points. Thus $\LL_{4,3}(2^7)$ is special, having virtual dimension equal to $-1$.


\section{The first degeneration}\label{Ideg}

Let us first define the integers
$$
n^-=n^-(r,d):=\left\lfloor \frac{1}{r+1}{{r+4}\choose4}\right\rfloor, \ 
n^+=n^+(r,d):=\left\lceil \frac{1}{r+1}{{r+4}\choose4}\right\rceil.
$$
 
If non-speciality holds for a collection of $n^-$ double points, then it holds for a smaller number of double points. On the other hand, if there are no hypersurfaces of degree $d$ with $n^+$ general nodes, the same is true adding other nodes. 
Our aim is to prove   by induction on $r$ and $d$ that $\LL_{r,d}(2^n)$ is non-special for $n^-\leq n\leq n^+$, except in the list of Theorem \ref{A-H}.\\
The  technique consists in degenerating $\p^r$ to a reducible variety and  studying how a linear system on the general fiber degenerates. The limiting system will be easier than the general one, and this will enable us to use induction.

Let $\Delta$ be a complex disc with center at the origin. Consider the product $\mathcal{V}=\p^r \times \Delta$ with the natural projections $p_1$ and $p_2$. Let $V_t=\p^r \times \{t\}$ be the fiber of $p_2$ over $t\in\Delta$. Take a point $(p,{0})$ in the central fiber $V_0$ and blow it up to obtain a new $(r+1)$-fold $\mathcal{X}$ with the maps 
  $f:\mathcal{X} \rightarrow \mathcal{V}$,
  $\pi_1=p_1 \circ f$ 
 and $\pi_2=p_2 \circ f$:

\begin{displaymath}
\xymatrix{ 
\mathcal{X}  \ar[rr]_f \ar[drr]_{\pi_2} \ar@/^/[rrrr]^{\pi_1}  && \mathcal{V} \ar[rr]_{p_1} \ar[d]^{p_2}   && \p^r \\
&& \Delta }
\end{displaymath}
The so obtained flat morphism $\pi_2:\mathcal{X} \rightarrow \Delta$, with fiber $X_t=\pi_2^{-1}(t)$, $t \in\Delta$, produces a $1$-dimensional degeneration of $\p^r$. If $t \neq 0$ then $X_t=V_t$ is a $\p^r$, while for $t=0$ the fiber $X_0$ is the union of the strict transform $\F$ of $V_0$ and the exceptional divisor $\p\cong\p^r$ of the blow-up. The two varieties $\p$ and $\F$ meet transversally along a $(r-1)$-dimensional variety $R$ which is isomorphic to $\p^{r-1}$: it represents a hyperplane on $\p$ and the exceptional divisor on $\F$.\\
A line bundle  on $X_0$ corresponds to two line bundles,
 respectively on $\p$ and on $\F$, which agree on the intersection $R$. Precisely $\textrm{Pic}(X_0)=\textrm{Pic}(\p)\times_{\textrm{Pic}(R)}\textrm{Pic}(\F)$, where the Picard group of $\p$ is generated by $\mathcal{O}(1)$, while the Picard group of $\F$ is generated by the hyperplane class $H$ and the class $E$ of the exceptional divisor.\\
Consider the line bundle $\mathcal{O}_{\mathcal{X}}(d)=\pi^{\ast}_1(\mathcal{O}_{\p^r}(d))$:  its restriction to the general fiber $X_t\cong\p^r$  is isomorphic to $\mathcal{O}_{\p^r}(d)$, while on the central fiber the restrictions to $\p$ and $\F$ are $\mathcal{O}_{\p}$ and $\mathcal{O}_{\F}(dH)$ respectively.
Now let us execute a twist by the bundle $\mathcal{O}_{\mathcal{X}}(-(d-1)\p)$: the restriction to $X_t$ is still the same, while the restrictions to $\p$ and $\F$ become
$$
\mathcal{O}_{\p}(d-1) \textrm{ and } \mathcal{O}_{\F}(dH-(d-1)E):
$$ 
the resulting line bundle on $X_0$ is a flat limit of the bundle $\mathcal{O}_{\p^r}(d)$ on the general fiber. Such a limit is not unique.\\
We now consider the homogeneus linear system $\LL_t:=\LL=\LL_{r,d}(2^n)$ of the hypersurfaces of $\p^r$ of degree $d$ with $n$ assigned general points $p_{1,t},\dots,p_{n,t}$ of equal multiplicity $m=2$.
Fix a non-negative integer $b \leq n$ and specialize $b$ points generically on $\F$ and the other $n-b$ points generically on $\p$: i.e. take a flat family $\{p_{1,t}\dots,p_{n,t}\}_{t\in\Delta}$ such that $p_{1,0},\dots,p_{b,0}\in\F$ and $p_{b+1,0},\dots,p_{n,0}\in\p$. 
The limiting linear system $\LL_0$ on $X_0$  is formed by the divisors in the flat limit of the bundle $\mathcal{O}_{\p^r}(d)$ on the general fiber $X_t$, singular at $p_{1,0},\dots,p_{n,0}$.  This system restricts to $\F$ and to $\p$ to the following systems:
$$
\LL_{\p}= \LL_{r,d-1}(2^{n-b}) \textrm{ and }
\LL_{\F} = \LL_{r,d}(d-1,2^b),
$$
where the point of multiplicity $d-1$ is the point $p\in V_0\cong\p^r$ which we  blew up to obtain $\F$.
We say that the limit linear system $\LL_0$ is obtained from $\LL$ by a $(1,b)$-\emph{degeneration}, according to \cite{CM1}.
An element of $\LL_0$ consists either of a divisor on $\p$ and a divisor  on $\F$, both satisfying the conditions imposed by the multiple points, which restrict to the same divisor on $R$, or it is a divisor corresponding to a section of the bundle which is identically zero on $\p$ (or on $\F$) and which gives a general divisor in $\LL_{\F}$ (or in $\LL_{\p}$ respectively)  containing $R$ as a component.\\
If we denote by $l_0$ the dimension of $\LL_0$ on $X_0$, we have, by upper semicontinuity, that
$
l_0 \geq \dim(\LL_t)=\dim(\LL_{r,d}(2^n)).
$
\begin{lemma}
In the above notation, if $l_0 = e(\LL_{r,d}(2^n))$, then the linear system $\LL$ has the expected dimension, i.e. it is non-special.
\end{lemma}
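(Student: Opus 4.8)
The plan is to deduce the lemma from the semicontinuity inequality $l_0 \geq \dim(\LL_{r,d}(2^n))$ recorded just above the statement, together with the elementary fact that the actual dimension of a linear system can never drop below its expected dimension.

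First I would recall why $\dim(\LL) \geq e(\LL)$ holds unconditionally for $\LL = \LL_{r,d}(2^n)$ with $n$ general points. Reading off the restriction exact sequence in cohomology from the Introduction, one gets $h^0(\p^r,\LL) \geq h^0(\p^r,\LL_{r,d}) - h^0(Z,{\LL_{r,d}}_{|Z}) = {r+d \choose r} - n{r+1 \choose r}$, hence $\dim(\LL) \geq v(\LL)$; combined with the trivial bound $\dim(\LL) \geq -1$ this gives $\dim(\LL) \geq \max\{v(\LL),-1\} = e(\LL)$.

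Then I would chain the two inequalities. Under the hypothesis $l_0 = e(\LL_{r,d}(2^n))$ one obtains
$$ e(\LL_{r,d}(2^n)) = l_0 \geq \dim(\LL_{r,d}(2^n)) \geq e(\LL_{r,d}(2^n)), $$
so every term is equal; in particular $\dim(\LL_{r,d}(2^n)) = e(\LL_{r,d}(2^n))$, which says precisely that $\LL$ is non-special, as asserted.

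The only genuinely substantive ingredient — and it belongs to the set-up of the $(1,b)$-degeneration rather than to this lemma — is that the flat family $\{p_{i,t}\}_{t\in\Delta}$ may be chosen so that $p_{1,t},\dots,p_{n,t}$ remain in general position in $X_t \cong \p^r$ for general $t$, whence $\dim(\LL_t) = \dim(\LL_{r,d}(2^n))$ for $n$ general points. Granting this, upper semicontinuity of $h^0$ along the flat morphism $\pi_2$, applied to the flat limit of $\mathcal{O}_{\p^r}(d)$ twisted so as to produce $\LL_0$ on $X_0$, yields the inequality $l_0 \geq \dim(\LL_{r,d}(2^n))$ used above, and the rest is the formal squeeze. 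There is no real obstacle in proving the lemma itself; the substance of the method lies in actually computing $l_0$, which is the business of the later sections.
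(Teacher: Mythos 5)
Your proof is correct and is exactly the argument the paper intends: the lemma is stated immediately after the semicontinuity inequality $l_0 \geq \dim(\LL_t)$, and the intended (unwritten) justification is precisely the squeeze $e(\LL) = l_0 \geq \dim(\LL) \geq e(\LL)$ that you carry out. Your additional remarks on why $\dim(\LL)\geq e(\LL)$ and on the generality of the points for $t\neq 0$ correctly identify the only ingredients being used.
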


Let us consider the restriction exact sequences to $R\cong \p^{r-1}\subset\p^r$:
$$
0 \rightarrow \hat{\LL}_{\p}\rightarrow \LL_{\p}\rightarrow \RR_{\p} \subseteq |\mathcal{O}_{\p^{r-1}}(d-1)|
\textrm{ and }0 \rightarrow \hat{\LL}_{\F}\rightarrow \LL_{\F}\rightarrow \RR_{\F} \subseteq |\mathcal{O}_{\p^{r-1}}(d-1)|,
$$
  where $\RR_{\p}$, $\RR_{\F}$ denote the restrictions of the systems  $\LL_{\p}$, $\LL_{\F}$ to $R$ and $\hat{\LL}_{\p}$, $\hat{\LL}_{\F}$ denote the kernel systems:
$$
\hat{\LL}_{\p}  = \LL_{r,d-2}(2^{n-b}) \textrm{ and }
\hat{\LL}_{\F}  =  \LL_{r,d}(d,2^b).
$$
The kernel $\hat{\LL}_{\p}$ consists of those sections of $\LL_{\p}$ which vanish identically on $R$, i.e. the divisors in $\LL_{\p}$ containing  $R\cong\p^{r-1}$ as component; the same holds for $\hat{\LL}_{\F}$.\\
We denote by $v_{\p}$, $v_{\F}$, $\hat{v}_{\p}$, $\hat{v}_{\F}$ and by $l_{\p}$, $l_{\F}$, $\hat{l}_{\p}$, $\hat{l}_{\F}$ the virtual and the actual dimensions of the various linear systems.
Our aim is to to compute  $l_0$ by recursion. The simplest cases occurs when all the divisors in $\LL_0$ come from a section which is identically zero on one of the two components: in those cases the matching sections of the other system must lie in the kernel of the restriction map. 
If, on the contrary, the divisors on $\LL_0$ consist of a divisor on $\p$ and a
divisor on $\F$, both not identically zero, which match on $R$, then the
dimension of $\LL_0$ depends on the dimension of the intersection
$\RR:=\RR_\p\cap\RR_\F$ of the restricted systems. 
A section of $H^0(X_0,\LL_0)$ is obtained by taking an element in $H^0(R,\RR)$ and choosing preimages of such an element: $h^0(X_0,\LL_0)=h^0(R,\RR)+h^0(\p,\hat{\LL}_\p)+h^0(\F,\hat{\LL}_\F)$. Thus, at the linear system level
\begin{eqnarray}\label{formula l_0} 
l_0=\textrm{dim}(\RR)+\hat{l}_{\p}+\hat{l}_{\F}+2. 
\end{eqnarray}
The crucial point is to compute the dimension of $\RR$, from which one obtains $l_0$. If the systems $\RR_{\p},\RR_{\F} \subset |\OO_{\p^{r-1}}(d-1)|$ are \emph{transversal}, i.e. if they intersect properly inside $|\mathcal{O}_{\p^{r-1}}(d-1)|$, then
$
\dim(\RR)=\max\left\{r_{\p}+r_{\F}-{{d+r-2}\choose{r-1}}+1,-1\right\} .
$

Notice that transversality holds if at least one between $\LL_\p$ and $\LL_\F$ cuts the complete series on $R$.


\subsection{Some useful lemmas}\label{LF}

For what  concerns the analysis of the linear system on $\p$ and the relative kernel system, we can exploit induction on $d$ because they are linear systems of hypersurfaces of lower degree with nodes. Actually this is the reason for  performing degenerations as described above. However, in general the systems $\LL_{\F}$ and $\hat{\LL}_{\F}$ are unknown because of the presence of a point of greater multiplicity in their base locus. This section is devoted to the study of such linear systems.
\begin{lemma}
\label{system capLF}
The linear system $\LL_{r,d}(d,2^b)$  is either special of dimension  $\dim(\LL_{r-1,d}(2^b))$, or it is empty. 
\end{lemma}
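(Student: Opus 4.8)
The plan is to identify $\LL_{r,d}(d,2^b)$ with a linear system of the \emph{same} dimension on $\p^{r-1}$ by projecting from the point $p$ of multiplicity $d$, and then to read off speciality from a virtual-dimension count. First I would observe that every hypersurface $X\in\LL_{r,d}(d,2^b)$ is a cone with vertex $p$: any line $\ell\ni p$ meets $X$ at $p$ with intersection multiplicity at least $\mult_p(X)=d$, while $\deg(\ell\cdot X)=d=\deg X$ by B\'ezout, so $\ell\subseteq X$. Choosing homogeneous coordinates with $p=[0:\dots:0:1]$, this says that the degree-$d$ equation of $X$ involves only $x_0,\dots,x_{r-1}$; equivalently $X$ is pulled back under the projection $\pi_p\colon\p^r\setminus\{p\}\to\p^{r-1}$.

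Next I would promote this to an isomorphism of linear systems $\LL_{r,d}(d,2^b)\cong\LL_{r-1,d}(2^b)$, whose $b$ base points are the images $\bar p_1,\dots,\bar p_b$ of $p_1,\dots,p_b$ under $\pi_p$. The map $X\mapsto\bar X$ sending the equation $F(x_0,\dots,x_{r-1})$ of $X$ to the same polynomial read as a form on $\p^{r-1}$ is linear and bijective on spaces of forms, so the only thing to verify is that the double-point conditions correspond. Since $p_i\neq p$, in an affine chart in which the coordinate coming from $x_r$ is free together with $r-1$ of the $x_j$, the equation $F$ does not involve that chart coordinate, whence $\mult_{p_i}(X)=\mult_{\bar p_i}(\bar X)$. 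As $p_1,\dots,p_b$ are general, so are $\bar p_1,\dots,\bar p_b$, so this identification is with a system of $b$ general double points of $\p^{r-1}$; in particular
$$
\dim\LL_{r,d}(d,2^b)=\dim\LL_{r-1,d}(2^b),
$$
and one side is empty exactly when the other is, giving the second alternative of the lemma.

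It then remains to check that, when nonempty, $\LL_{r,d}(d,2^b)$ is special, and here a binomial identity does the work. By Pascal's rule $\binom{r+d}{r}-\binom{r+d-1}{r}=\binom{r+d-1}{r-1}$, so
$$
v\bigl(\LL_{r,d}(d,2^b)\bigr)=\binom{r+d-1}{r-1}-1-b(r+1)=v\bigl(\LL_{r-1,d}(2^b)\bigr)-b.
$$
Hence, for $b\ge1$ (the range relevant to the degeneration),
$$
\dim\LL_{r,d}(d,2^b)=\dim\LL_{r-1,d}(2^b)\ge v\bigl(\LL_{r-1,d}(2^b)\bigr)=v\bigl(\LL_{r,d}(d,2^b)\bigr)+b>v\bigl(\LL_{r,d}(d,2^b)\bigr),
$$
and since a nonempty system also has $\dim\ge 0>-1$, its dimension strictly exceeds $e\bigl(\LL_{r,d}(d,2^b)\bigr)=\max\{v,-1\}$; thus it is special of dimension $\dim\LL_{r-1,d}(2^b)$, as claimed. (For $b=0$ one simply recovers the cone system $|\OO_{\p^{r-1}}(d)|$, which plays no role later.)

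There is no serious obstacle in this lemma: the entire content is the cone observation together with the remark that projecting the general base points from $p$ leaves them general, from which the dimension statement — and with it the whole lemma — is immediate, the remaining speciality claim being a one-line binomial computation.
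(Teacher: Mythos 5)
Your proof is correct and follows essentially the same route as the paper: observe that multiplicity $d$ at $p$ forces the hypersurface to be a cone with vertex $p$, identify the system with $\LL_{r-1,d}(2^b)$ via projection of the (still general) double points, and then read off speciality from the discrepancy of $b$ between the two virtual dimensions (which the paper phrases equivalently as $h^1(\LL_{r,d}(d,2^b))=b+h^1(\LL_{r-1,d}(2^b))$). Your explicit remark that the case $b=0$ is the non-special cone system, excluded from the statement, is a reasonable clarification but not a substantive difference.
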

\begin{proof}
A hypersurface of $\p^r$ of degree $d$ having multiplicity $d$ at a point $p$ is a cone of degree $d$ with vertex at $p$. 
Let $p_1,\dots,p_b$ be the general double points. A divisor in $\LL_{r,d}(d,2^b)$ is a cone, with vertex at $p$, over a hypersurface of degree $d$ in a general hyperplane $\pi\cong\p^{r-1}$ of $\p^r$ ($p\notin\pi$) that must be singular at the $b$ points obtained from $p_1,\dots,p_b$ as projection from $p$ to $\pi$, which are general in $\pi$. 
$h^0(\p^r,\LL_{r,d}(d,2^b))=h^0(\p^{r-1},\LL_{r-1,d}(2^b))$; moreover $h^1(\p^r,\LL_{r,d}(d,2^b))=b+h^1(\p^{r-1},\LL_{r-1,d}(2^b))$,
and this concludes the proof.
\end{proof}

From now on, we assume that the case of cubics is already solved, i.e. that $\LL_{r,3}(2^n)$ is non-special exept if $r=4$ and $n=7$. The proof of this is completely untied from what follows and it will be discussed in Section \ref{cubics}.  \\We are going to prove that  there exists an upper bound on the number $k$ of nodes such that the linear system $\LL_{r,d}(d-1,2^k)$ is non-special. The proof will be by induction on both $d$ and $r$. Lemma \ref{LF non-special quartics} and Lemma \ref{L_F non speciality trick} provide the starting points of the induction: $d=4$ and  $r=3$ respectively.
Define the integer
$$
k(r):=\left\lceil \frac{1}{r+1}{{r+4}\choose4}\right\rceil-r-1 
$$
\begin{lemma}\label{LF non-special quartics}
Let $r\geq2$. The linear system $\LL_{r,4}(3,2^{k})$, with 
$
k\leq k(r),
$
is non special.
\end{lemma}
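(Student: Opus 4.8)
We must show that $\LL_{r,4}(3,2^k)$ is non-special for $k\le k(r)=\left\lceil\frac{1}{r+1}\binom{r+4}{4}\right\rceil-r-1$. By the geometric description in Lemma~\ref{system capLF}'s proof, a quartic with a triple point at $p$ is \emph{not} forced to be a cone, so we cannot simply project; instead we apply the $(1,b)$-degeneration machinery of Section~\ref{Ideg} directly to the system $\LL_{r,4}(3,2^k)$. The idea is to blow up a point of the central fiber and twist so that the exceptional component $\p\cong\p^r$ carries $\LL_{\p}=\LL_{r,3}(2^{k-b})$ (a \emph{cubic} system, which by the standing hypothesis of the excerpt is already known to be non-special unless $r=4,k-b=7$), while $\F$ carries a system of the shape $\LL_{r,4}(3,d-1,2^b)=\LL_{r,4}(3,3,2^b)$ — a quartic with \emph{two} assigned points, the original triple point and the new point of multiplicity $d-1=3$ coming from the blow-up. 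One then has to choose $b$ so that both external systems, as well as the two kernel systems $\hat\LL_{\p}=\LL_{r,2}(2^{k-b})$ and $\hat\LL_{\F}=\LL_{r,4}(3,4,2^b)$ (a cone system, handled by Lemma~\ref{system capLF}-type reasoning), are non-special with virtual dimension $\ge -1$, and so that the restricted systems $\RR_{\p},\RR_{\F}\subset|\OO_{\p^{r-1}}(3)|$ are transversal. Since $\LL_{\p}=\LL_{r,3}(2^{k-b})$ will typically be non-special and, for a suitable value of $k-b$, will cut out the complete linear system $|\OO_{\p^{r-1}}(3)|$ on $R$, transversality follows from the last remark of Section~\ref{Ideg} for free. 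Plugging into formula \eqref{formula l_0} and doing the (routine but delicate) Euler-characteristic bookkeeping then yields $l_0=e(\LL_{r,4}(3,2^k))$, which by the preceding Lemma gives non-speciality of the original system.

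**Key steps, in order.** First, fix the degeneration parameter: choose $b=b(r)$ so that $\dim\LL_{r,3}(2^{k-b})$ is exactly the dimension needed, i.e.\ so that $n^-(r,3)\le k-b$, forcing $\LL_{\p}$ to be a non-special cubic system that moreover restricts surjectively to $|\OO_{\p^{r-1}}(3)|$ on $R$; this simultaneously pins down transversality. Second, analyze $\LL_{\F}=\LL_{r,4}(3,3,2^b)$: by projecting from the line spanned by the two triple points (or by a secondary Horace argument on $\F$), reduce its dimension count to lower-dimensional quartic-with-double-points data and verify it has virtual dimension $\ge -1$ and is non-special in the relevant range of $b$. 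Third, handle the kernels: $\hat\LL_{\p}=\LL_{r,2}(2^{k-b})$ is a quadric system, hence either empty or of known (possibly special!) dimension $\binom{r-(k-b)+2}{2}-1$ by Section~\ref{specials} — here one must be careful that $k-b$ is large enough ($k-b\ge r+1$) to force emptiness, otherwise the extra sections must be accounted for; and $\hat\LL_{\F}=\LL_{r,4}(3,4,2^b)$ is a cone over a quartic in $\p^{r-1}$ with one triple and $b$ double points, whose cohomology is governed exactly as in Lemma~\ref{system capLF}. Fourth, assemble: substitute $\dim(\RR)$ (computed from transversality), $\hat l_{\p}$, $\hat l_{\F}$ into \eqref{formula l_0}, compare with $v(\LL_{r,4}(3,2^k))=\binom{r+4}{4}-1-\binom{r+2}{2}-k\binom{r+1}{2}$, and check equality; then invoke the Lemma immediately after \eqref{formula l_0}.

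**Main obstacle.** The hard part will be the choice of $b$ together with controlling the kernel $\hat\LL_{\p}=\LL_{r,2}(2^{k-b})$: the quadric system is genuinely special when $2\le k-b\le r$, so to keep the recursion clean one wants $k-b\ge r+1$ (emptiness) while still having $k-b\ge n^-(r,3)$ so that $\LL_{\p}$ is a ``full'' non-special cubic system cutting the complete series on $R$ — reconciling these two demands against the precise value of $k(r)$ and the floor/ceiling in $n^-$ is where the arithmetic bites, and it is exactly the point at which the bound $k\le k(r)$ (rather than something larger) becomes essential. A secondary difficulty is that $\LL_{\F}=\LL_{r,4}(3,3,2^b)$ is not one of the systems inductively under control, so one either needs an ad hoc lemma for two-triple-point quartics (a further degeneration on $\F$, or direct projection from the line through the two triple points reducing to $\LL_{r-2,4}(2^b)$-type data plus correction terms) or must arrange $b$ small enough that $\LL_{\F}$ is handled by the dimension count directly. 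I expect the cleanest route is: prove the statement first for $r=3$ by a direct (small) computation as the base case — this is why the Lemma is singled out as a starting point of the induction on $r$ — and then run the degeneration above with the inductive hypothesis on $r-1$ available, so that $\LL_{r-1,\bullet}$ systems appearing on $R$ and after projection are all known.
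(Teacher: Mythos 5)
Your proposal takes a genuinely different route from the paper, and as written it does not go through. The paper's own proof is a short Horace-type argument: induction on $r$, specializing the triple point together with $k(r-1)$ of the nodes onto a hyperplane $\pi\subset\p^r$, so that the trace is $\LL_{r-1,4}(3,2^{k(r-1)})$ (handled by the inductive hypothesis) and the residual is the cubic system $\LL_{r,3}(2^{1+k(r)-k(r-1)},1^{k(r-1)})$, with base case $r=2$ supplied by Lemma \ref{d-1 point in p2}. No $(1,b)$-degeneration is used here at all; indeed this lemma is meant to be an \emph{input} to the degeneration machinery (it is the $d=4$ base step for Proposition \ref{per LF non special k(r,d)}, which is what later guarantees non-speciality of $\LL_{\F}$), so running the degeneration machinery to prove it risks circularity unless all ingredients are re-established independently.

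The concrete gaps in your plan are the following. First, your constraints on $b$ are mutually incompatible: you want $\hat\LL_{\p}=\LL_{r,2}(2^{k-b})$ empty, which forces $k-b\ge r+1$, and simultaneously you want $\LL_{\p}=\LL_{r,3}(2^{k-b})$ to cut the complete series $|\OO_{\p^{r-1}}(3)|$ on $R$; with empty kernel the restriction is injective, so this would require $\dim\LL_{r,3}(2^{k-b})\ge\binom{r+2}{3}-1$, i.e.\ $(k-b)(r+1)\le\binom{r+2}{2}$, i.e.\ $k-b\le\frac{r+2}{2}<r+1$. A fortiori, your stronger demand $k-b\ge n^-(r,3)$ makes $\LL_{\p}$ nearly empty, so it certainly cannot surject onto the complete cubic series on $R$; the transversality you claim ``for free'' therefore fails, and it would have to come from the $\F$ side, via a new analogue of Lemma \ref{transversality} for a system with two triple points. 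Second, that system $\LL_{\F}=\LL_{r,4}(3,3,2^b)$ is nowhere controlled: you correctly flag it as a difficulty, but you neither prove the ad hoc lemma nor arrange to avoid it, and nothing in the paper applies to it. Third, the $(1,b)$-degeneration lowers the \emph{degree} on $\p$, not the dimension, so it does not give you access to ``the inductive hypothesis on $r-1$''; the only place $\p^{r-1}$ appears is in the restricted systems on $R$. (Minor: your virtual-dimension formula should read $v=\binom{r+4}{4}-1-\binom{r+2}{2}-k(r+1)$, since a node in $\p^r$ imposes $r+1$ conditions, not $\binom{r+1}{2}$.)
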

\begin{proof}
The proof is by induction on $r$. It suffices to prove the statement for $k(r)$ nodes. For $k<k(r)$, non-speciality  is a  consequence. The base step is the case $r=2$: the system $\LL_{2,4}(3,2^{2})$ is non-special (see Lemma \ref{d-1 point in p2}).
 Consider now the scheme $Z$ given by the union of  the triple point and $k(r-1)<k(r)$ double points. If $\pi \subset \p^r$ is a fixed hyperplane containing the support of $Z$, then the trace of $Z$ with respect to $\pi$ is the scheme $Z\cap \pi$, while the residual scheme is given by a point of multiplicity $2$ and $k(r-1)$ simple points. Thus we get the restriction exact sequence,
$$
0\rightarrow \hat{\LL}:= 	\LL_{r,3}(2^{1+k(r)-k(r-1)},1^{k(r-1)})\rightarrow \LL_{r,4}(3,2^{k(r)})\rightarrow\LL_{\pi}:=\LL_{r-1,4}(3,2^{k(r-1)}).
$$
This gives us the induction on $r$.
The system on the right is non special with virtual dimension at least $-1$, by the inductive hypothesis; moreover the system on the left is non-special and it has virtual dimension $v(\hat{\LL})  \geq  -1$.
 Moreover
$
\dim( \LL_{r,4}(3,2^{k(r)}))=\dim( \LL_{\pi})+\dim (\hat{\LL})+1=v(\LL_{r,4}(3,2^{k(r)})),
$
and this concludes the proof.
\end{proof}

Define now the integers 
$$
k_0(d):=\left\lfloor \frac{d^2+2d-3}{4}\right\rfloor \textrm{ and } h(d):=\left\lfloor \frac{2d+1}{3}\right\rfloor
$$

The reader can easily check that  $k_0(d)-h(d)\leq k_0(d-1)$.

\begin{lemma}\label{d-1 point in p2}
The linear system $\LL_\pi=\LL_{2,d}(d-1,2^{h(d)})$ is non-special, for $d\geq 3$.
\end{lemma}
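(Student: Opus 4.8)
The plan is to reduce $\LL_\pi=\LL_{2,d}(d-1,2^{h(d)})$, by a chain of restrictions to lines through the point $p$ of multiplicity $d-1$, to a linear system of hyperplane sections through general points of a surface of minimal degree, where non-speciality is transparent.

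First I would set up the \emph{peeling step}. Consider any system $\LL_{2,d'}(d'-1,2^a,1^c)$ of plane curves of degree $d'$ with a point of multiplicity $d'-1$ at $p$, $a\geq 1$ general double points and $c$ general simple points, and let $Z$ be the associated zero-dimensional scheme. Pick one double point $q$ and restrict to the line $L=\overline{pq}$. For a general configuration, $L$ meets $Z$ only along the fat point at $p$ (in a scheme of length $d'-1$) and the double point at $q$ (length $2$), so $\deg(Z\cap L)=d'+1$ and the trace of the system on $L$ is $|\OO_L(d'-(d'+1))|=|\OO_{\p^1}(-1)|$, which has vanishing $h^0$ and $h^1$. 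The residual exact sequence $0\to\mathcal I_{\mathrm{Res}_LZ}(d'-1)\to\mathcal I_Z(d')\to\OO_L(-1)\to 0$, together with $H^j(\p^1,\OO(-1))=0$ for $j=0,1$, then shows that $\LL_{2,d'}(d'-1,2^a,1^c)$ has the same $h^0$ and $h^1$ as the system attached to $\mathrm{Res}_LZ$, namely $\LL_{2,d'-1}(d'-2,2^{a-1},1^{c+1})$. Since $d'-2=(d'-1)-1$, the ``multiplicity equals degree minus one'' shape is preserved, with one double point traded for a simple point and the degree dropped by one. Iterating this $h(d)$ times — legitimate because $h(d)\leq d-1$ keeps the multiplicity at $p$ at least $1$ at every stage — I obtain that $\LL_\pi$ has the same $h^0$ and $h^1$ as $\LL_{2,\delta}(\delta-1,1^{h(d)})$, where $\delta:=d-h(d)\geq 1$.

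To finish, I would identify $\LL_{2,\delta}(\delta-1)$: on $\mathrm{Bl}_p\p^2=\F_1$ it is $|C_0+\delta f|$ (with $C_0$ the $(-1)$-section and $f$ the ruling), which is very ample for $\delta\geq 2$ and the contraction to $\p^2$ for $\delta=1$; in either case it realizes $\p^2$ birationally as a surface $S$ of minimal degree $2\delta-1$ in $\p^{2\delta}$, the rational normal scroll $S(\delta-1,\delta)$ (or $\p^2$ itself when $\delta=1$). In particular $S$ is irreducible and non-degenerate with $h^0(\OO_S(1))=2\delta+1$ and $h^1(\OO_S(1))=0$, and $\LL_{2,\delta}(\delta-1,1^{h(d)})$ becomes the system of hyperplane sections of $S$ through the $h(d)$ general points $W\subset S$ corresponding to the general simple points. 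Since the arithmetic condition $3h(d)\leq 2d+1$ is exactly $h(d)\leq 2\delta+1=h^0(\OO_S(1))$, and general points of an irreducible non-degenerate subvariety of $\p^N$ impose independent conditions on $|\OO(1)|$ as long as their number is at most $N+1$, these $h(d)$ points impose independent conditions; combined with $h^1(\OO_S(1))=0$ this forces $h^1(\mathcal I_{W/S}(1))=0$, hence $h^1(\LL_\pi)=0$ and $\LL_\pi$ is non-special.

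The routine verifications are the arithmetic inequalities $h(d)\leq d-1$ and $3h(d)\leq 2d+1$ (both immediate from $h(d)=\lfloor(2d+1)/3\rfloor$) and the degree count for the trace. The point that I expect to require care — and the main obstacle — is checking that the peeling is genuinely valid at \emph{every} stage: that the invariant ``multiplicity at $p$ equals the current degree minus one'' is preserved (so the trace is always $\OO_L(-1)$), that a general line through $p$ and one double point meets $Z$ in exactly the expected length and avoids all the remaining assigned points, and that the multiplicity at $p$ never drops to $0$ before the $h(d)$ steps are completed. Organising this as a clean induction on the number of peels, with the genericity hypotheses propagated correctly through the residual schemes, is the delicate bookkeeping; the geometric input — surfaces of minimal degree and general points imposing independent conditions — is entirely standard.
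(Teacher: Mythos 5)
Your proof is correct, and its engine is exactly the paper's: the restriction to the line $L=\overline{pq}$ joining the $(d-1)$-fold point to one node, whose trace is $\OO_{\p^1}(d-(d-1)-2)=\OO_{\p^1}(-1)$ with vanishing $h^0$ and $h^1$, so that the cohomology of the system coincides with that of the residual $\LL_{2,d-1}(d-2,2^{h(d)-1},1)$ --- this is precisely the exact sequence the paper writes down, with the trace recorded there as the empty system $\LL_{1,d}(d-1,2)$. Where you differ is in how the recursion is closed. The paper peels once and invokes induction on $d$; read literally, the inductive hypothesis concerns $\LL_{2,d-1}(d-2,2^{h(d-1)})$, whereas the residual has $h(d)-1\leq h(d-1)$ double points \emph{plus} a simple point, so the statement must implicitly be strengthened to tolerate extra general simple points. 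Your version unrolls the recursion through all $h(d)$ peels, carrying the invariant ``multiplicity at $p$ equals degree minus one'' explicitly, which makes that bookkeeping precise; the arithmetic you rely on ($h(d)\leq d-1$ and $3h(d)\leq 2d+1$) is correct, and the genericity propagation is automatic since the residual scheme is supported at the original general points. Your endgame via the scroll $S(\delta-1,\delta)$ is valid but heavier than needed: once one knows $\LL_{2,\delta}(\delta-1)$ is non-special of dimension $2\delta$ (a single fat point with $m\leq d$ always imposes independent conditions in $\p^2$), the $h(d)$ general simple points impose independent conditions for the elementary reason that a general point is never a base point of a non-empty linear system, so $h^0\cdot h^1=0$ follows without invoking varieties of minimal degree. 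Either route completes the lemma; yours is longer but fills in the small step the paper's one-line induction leaves implicit.
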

\begin{proof}
The statement follows by induction on $d$ and by the following restriction sequence:$$0\rightarrow \LL_{2,d-1}(d-2,2^{h(d)-1},1)\rightarrow \LL_\pi\rightarrow \LL_{1,d}(d-1,2).$$
\end{proof}

\begin{lemma}\label{L_F non speciality trick}
Let $d\geq 4$ and    $k\leq k_0(d)$. The linear system $\LL=\LL_{3,d}(d-1,2^k)$ is non-special.
\end{lemma}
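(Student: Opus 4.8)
The plan is to argue by induction on $d$, restricting along a plane $\pi\cong\p^2\subset\p^3$ chosen to contain the point of multiplicity $d-1$ together with a convenient number of the nodes; this is the same Castelnuovo (Horace) mechanism already used in the proofs of Lemma \ref{LF non-special quartics} and Lemma \ref{d-1 point in p2}. As usual it suffices to prove the statement for the maximal value $k=k_0(d)$: a short computation gives $v\big(\LL_{3,d}(d-1,2^{k_0(d)})\big)=(d+1)^2-1-4k_0(d)\ge 3$, so this system is non-empty, hence removing nodes one at a time makes each removed node impose independent conditions and non-speciality descends to all $k\le k_0(d)$. The base case $d=4$ is already covered: since for $r=3$ one has $k_0(4)=5\le k(3)=\lceil\tfrac14\binom84\rceil-4=14$, the system $\LL_{3,4}(3,2^{k})$ is non-special for $k\le k_0(4)$ by Lemma \ref{LF non-special quartics}.

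For the inductive step let $d\ge 5$ and assume the statement in degree $d-1$. Place the point $p$ of multiplicity $d-1$ on $\pi$, specialize $h(d)$ of the $k_0(d)$ double points to general points of $\pi$, and leave the other $k_0(d)-h(d)$ nodes general in $\p^3$. Since the trace on $\pi$ of the fat point $m\cdot p$ with $p\in\pi$ is the fat point $m\cdot p$ of $\pi$, with residual $(m-1)\cdot p$, while a double point lying on $\pi$ has residual a simple point, taking traces and residuals with respect to $\pi$ produces the exact sequence
$$
0\rightarrow \hat\LL:=\LL_{3,d-1}\big(d-2,2^{\,k_0(d)-h(d)},1^{\,h(d)}\big)\rightarrow \LL_{3,d}\big(d-1,2^{k_0(d)}\big)\rightarrow \LL_\pi:=\LL_{2,d}\big(d-1,2^{\,h(d)}\big).
$$
The trace system $\LL_\pi$ is non-special by Lemma \ref{d-1 point in p2}, and its virtual dimension $2d-3h(d)$ is $\ge -1$, so $h^1(\LL_\pi)=0$. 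For the kernel I use the inequality $k_0(d)-h(d)\le k_0(d-1)$ recorded above: the inductive hypothesis gives that $\LL_{3,d-1}(d-2,2^{\,k_0(d)-h(d)})$ is non-special, and its virtual dimension is positive, so it is non-empty; adding a general simple point to a non-empty non-special linear system keeps it non-empty and non-special, because the evaluation at a general point is surjective, so $h^0$ drops by exactly one and $h^1$ is unchanged. Iterating $h(d)$ times, $\hat\LL$ is non-special, and one checks $v(\hat\LL)=d^2-1-4k_0(d)+3h(d)\ge -1$, hence $h^1(\hat\LL)=0$. The long exact cohomology sequence of the displayed sequence now gives $h^1\big(\LL_{3,d}(d-1,2^{k_0(d)})\big)=0$, and since the virtual dimension of this system is $\ge -1$, it is non-special; this completes the induction.

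The one structurally delicate point is the treatment of $\hat\LL$, which carries the $h(d)$ extra simple points produced by the residual step; it is handled by the elementary fact, used above, that general simple points impose independent conditions on a non-empty linear system. Everything else is numerical: one must know that $h(d)\le k_0(d)$ (so enough nodes are available to be specialized), that $k_0(d)-h(d)\le k_0(d-1)$ (quoted above), and that $v(\LL_\pi)\ge -1$ and $v(\hat\LL)\ge -1$ (so that for these two systems non-speciality is the same as $h^1=0$); these all follow immediately from the closed formulas $k_0(d)=\big\lfloor\tfrac{d^2+2d-3}{4}\big\rfloor$ and $h(d)=\big\lfloor\tfrac{2d+1}{3}\big\rfloor$, together with $h(d)\ge\tfrac{2d-2}{3}$.
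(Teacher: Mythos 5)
Your proof is correct and follows essentially the same route as the paper: the same Castelnuovo restriction to a plane containing $p$ and $h(d)$ of the nodes, the same exact sequence with kernel $\LL_{3,d-1}(d-2,2^{k_0(d)-h(d)},1^{h(d)})$ and trace $\LL_{2,d}(d-1,2^{h(d)})$, induction on $d$ via $k_0(d)-h(d)\le k_0(d-1)$, and the base case $d=4$ from Lemma \ref{LF non-special quartics}; you merely make explicit the numerical checks and the handling of the residual simple points that the paper leaves implicit. One small arithmetic slip: $k(3)=\lceil\frac{1}{4}\binom{7}{4}\rceil-4=5$, not $14$, but since $k_0(4)=5\le 5$ your base case still goes through.
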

\begin{proof}
We prove the statement by induction on $d$ for a collection of $k_0(d)$ points. The base step is the case $\LL_{3,4}(3,2^5)$ that is non-special by Lemma \ref{LF non-special quartics}. 
The induction is given by specializing $p$ and 
$h(d)$ nodes on a general plane $\pi\subseteq\p^3$: 
$$
\hat{\LL}:=\LL_{3,d-1}(d-2,2^{k_0(d)-h(d)},1^{h(d)})\rightarrow \LL\rightarrow\LL_{\pi}:=\LL_{2,d}(d-1,2^{h(d)}).
$$
the kernel system  $\hat{\LL}$ is non-special by induction. 
Moreover it has positive virtual dimension.
Finally,
$\dim(\LL)  = \dim(\hat{\LL})+\dim(\LL_{\pi})+1=e(\LL)$
and this concludes the proof.
\end{proof}

Now, we prove a non-speciality result for  linear systems of hypersurfaces of degree $d$ of $\p^r$  with a point of multiplicity $d-1$ and $k$ general nodes in full generality. 
To this,   define the number
$$
k(r,d):=\left\lfloor \frac{1}{r+1}{{r+d}\choose r}-\frac{1}{r+1}{{r+d-2}\choose r}\right\rfloor -(r-2),
$$
for every $r\geq3$ and $d\geq 5$. We want to prove that the linear system $\LL_{r,d}(d-1,2^{k})$ is non special, if $k\leq k(r,d)$.
\begin{rmk}
Notice that $k(3,d)$ is equal to the number $k_0(d)$ defined in Lemma \ref{L_F non speciality trick}, so that result can be employed as the base step of the induction on $r$. Moreover, being $k(r,4)\leq k(r)$, the linear system $\LL_{r,4}(3,2^k)$  is non-special by Lemma \ref{LF non-special quartics}, so this can be used as starting step of the induction on $d$.
\end{rmk}

As in the case  $r=3$, the trick will be to specialize $k(r-1,d)$ nodes on an hyperplane $\pi\cong \p^{r-1}$ containing the support of $p$  as follows:
\begin{eqnarray}\label{restr seq LF trick}
0\rightarrow \hat{\LL}\rightarrow\LL=\LL_{r,d}(d-1,2^{k(r,d)})\rightarrow\LL_{\pi}=\LL_{r-1,d}(d-1,2^{k(r-1,d)}),
\end{eqnarray}
where the kernel system is $\hat{\LL}=\LL_{r,d-1}(d-2,2^{k(r,d)-k(r-1,d)},1^{k(r-1,d)}).$  

\begin{prop}\label{per LF non special k(r,d)}
The linear system $\LL_{r,d}(d-1,2^{k})$, with $k \leq k(r,d)$ and $d\geq 4$, is non-special and it has virtual dimension at least $-1$.
\end{prop}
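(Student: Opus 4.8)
The plan is to prove Proposition \ref{per LF non special k(r,d)} by a double induction on $r$ and $d$, using the Castelnuovo restriction sequence \eqref{restr seq LF trick} exactly as indicated in the text. For the base cases of the induction: when $d=4$ we have $k(r,4)\le k(r)$, so Lemma \ref{LF non-special quartics} gives non-speciality of $\LL_{r,4}(3,2^k)$ for all relevant $r$; when $r=3$ the bound $k(3,d)$ coincides with the number $k_0(d)$ of Lemma \ref{L_F non speciality trick}, which therefore settles non-speciality of $\LL_{3,d}(d-1,2^k)$ for $d\ge 4$. So both edges of the induction grid are already available, and it suffices to carry out the inductive step for $r\ge 4$, $d\ge 5$, and $k=k(r,d)$ (smaller values of $k$ follow by semicontinuity, removing nodes only increases the dimension by at most the number of conditions dropped).

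For the inductive step I would specialize the point of multiplicity $d-1$ and $k(r-1,d)$ of the double points onto a general hyperplane $\pi\cong\p^{r-1}$, producing the sequence
$$
0\rightarrow \hat{\LL}=\LL_{r,d-1}(d-2,2^{k(r,d)-k(r-1,d)},1^{k(r-1,d)})\rightarrow \LL_{r,d}(d-1,2^{k(r,d)})\rightarrow \LL_{\pi}=\LL_{r-1,d}(d-1,2^{k(r-1,d)}).
$$
The restricted system $\LL_\pi$ is non-special with $v(\LL_\pi)\ge -1$ by the inductive hypothesis on $r$ (note $k(r-1,d)$ is by definition the admissible bound in dimension $r-1$). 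For the kernel system $\hat{\LL}$ one first replaces the $k(r-1,d)$ simple points by the same number of double points, which only enlarges the imposed conditions; the resulting system $\LL_{r,d-1}(d-2,2^{k(r,d)})$ should be shown non-special by the inductive hypothesis on $d$, which requires the arithmetic inequality $k(r,d)-k(r-1,d)+k(r-1,d)=k(r,d)\le k(r,d-1)+(\text{something})$ — more precisely one needs that after the Horace differential/simple trick the kernel falls within the range covered by the case $(r,d-1)$ already proved. Then one checks that $\hat{\LL}$ itself is non-special (the passage from double to simple points being handled by an elementary trace/residue argument, or by directly observing that $v(\hat\LL)\ge -1$ and non-speciality of the "all double" system forces it). Finally, adding dimensions,
$$
\dim(\LL_{r,d}(d-1,2^{k(r,d)})) = \dim(\hat{\LL}) + \dim(\LL_\pi) + 1 = v(\LL_{r,d}(d-1,2^{k(r,d)})),
$$
and the virtual dimension is $\ge -1$ because $v(\hat\LL)\ge -1$ and $v(\LL_\pi)\ge -1$ add up correctly; this closes the induction.

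The main obstacle I expect is purely arithmetic: verifying that the number of nodes assigned to the kernel system, $k(r,d)-k(r-1,d)$ double points plus $k(r-1,d)$ simple points, actually falls within the range $k(r,d-1)$ permitted by the inductive hypothesis in degree $d-1$, and simultaneously that $k(r-1,d)$ nodes on $\pi$ stay within the bound $k(r-1,d)$ permitted in dimension $r-1$ — i.e. that the floor function in the definition of $k(r,d)$ behaves well under the decompositions $\binom{r+d}{r}=\binom{r+d-1}{r}+\binom{r+d-1}{r-1}$ and $\binom{r+d-2}{r}=\binom{r+d-3}{r}+\binom{r+d-3}{r-1}$. One must also confirm that both $\hat\LL$ and $\LL_\pi$ have non-negative virtual dimension so that "no condition is lost" in the restriction, i.e. $h^1=0$ on both ends; these are elementary but somewhat delicate estimates involving the ceiling/floor terms and the correction $-(r-2)$, and they are where the choice of the precise constant in $k(r,d)$ is forced.
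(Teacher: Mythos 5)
Your overall strategy is exactly the paper's: the same double induction on $r$ and $d$, the same base cases (Lemma \ref{LF non-special quartics} for $d=4$ via $k(r,4)\leq k(r)$, and Lemma \ref{L_F non speciality trick} for $r=3$ via $k(3,d)=k_0(d)$), and the same restriction sequence (\ref{restr seq LF trick}). The one place where your argument would break is the treatment of the kernel $\hat{\LL}=\LL_{r,d-1}(d-2,2^{k(r,d)-k(r-1,d)},1^{k(r-1,d)})$. You propose to enlarge the $k(r-1,d)$ simple points to double points and invoke the degree-$(d-1)$ inductive hypothesis for $\LL_{r,d-1}(d-2,2^{k(r,d)})$; that would require $k(r,d)\leq k(r,d-1)$, which is false, since $\binom{r+d}{r}-\binom{r+d-2}{r}=\binom{r+d-1}{r-1}+\binom{r+d-2}{r-1}$ is strictly increasing in $d$ (for instance $k(3,5)=8>5=k(3,4)$). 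Your fallback --- that non-speciality of the ``all double'' system forces non-speciality of $\hat{\LL}$ --- is also not valid: a subscheme inherits independence of conditions only when the larger scheme imposes independent conditions on a system with $h^1=0$, and here the enlarged system carries too many nodes, so it is empty with $h^1>0$ and gives no information about $\hat{\LL}$.

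The inequality the paper actually uses, and the one you should verify, is $k(r,d)-k(r-1,d)\leq k(r,d-1)$: the degree induction is applied only to the \emph{double} points of the kernel, giving non-speciality with $h^1=0$ of $\LL_{r,d-1}(d-2,2^{k(r,d)-k(r-1,d)})$, and the $k(r-1,d)$ general \emph{simple} points are then added one at a time, each imposing one independent condition as long as the system stays nonempty, which is guaranteed by $v(\hat{\LL})\geq-1$. With this correction your argument coincides with the paper's proof.
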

\begin{proof}
Consider the restriction exact sequence in (\ref{restr seq LF trick}): $\LL_{\pi}$ is non-special by induction on $r$, and $v(\LL_{\pi})\geq-1$; moreover  $\hat{\LL}$ is non-special by induction on $d$ and by the fact that  $ k(r,d)-k(r-1,d)\leq k(r,d-1)$ as one can easily check; moreover  $v(\hat{\LL})\geq-1$.
Finally, $\dim(\LL)=\dim(\hat{\LL})+\dim(\LL_{\pi})+1=e(\LL)$ and this completes the proof. 
\end{proof}

\begin{rmk}
Lemma \ref{LF non-special quartics} provides an upper bound for the number of double points which is bigger than the one we need for the base step of the induction on the degree used in the proof of Proposition \ref{per LF non special k(r,d)}. Nevertheless  $k(r)$ is exactly the number of nodes that we will specialize on the component $\F$ in the proof of Alexander-Hirschowitz Theorem in degree four (Section \ref{quartics}).
\end{rmk}


\subsection{First transversality lemma}\label{transIdeg}
In this section we will show that it is possible to choose a specialization of the nodes such that $\LL_\F$ cuts the complete series on $R=\F\cap\p$, for $d\geq5$; in this way we also get transversality of the restricted systems.
\begin{lemma}[Transversality Lemma I]\label{transversality}
Performing a $(1,b)$-degeneration with $b\in\Z$ such that the system $\hat{\LL}_\F\cong \LL_{r-1,d}(2^b)$ has dimension $\hat{l}_\F=\dim(\LL_{r-1,d}(2^b))=v(\LL_{r-1,d}(2^b))\geq -1$, then the restricted systems $\RR_{\p}$ and $\RR_{\F}$ are transversal in $|\OO_{\p^{r-1}}(d-1)|$.
\end{lemma}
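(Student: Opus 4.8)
The plan is to prove that, under the hypothesis, the restricted system $\RR_\F$ is the \emph{complete} linear system on $R\cong\p^{r-1}$ of hypersurfaces of degree $d-1$ through $b$ general points; since such a system is cut out of $|\OO_{\p^{r-1}}(d-1)|$ by $b$ general hyperplane conditions, it is a general linear subspace, and transversality with the (arbitrary) linear subspace $\RR_\p$ then follows immediately. The serious point will be the surjectivity statement that identifies $\RR_\F$ with that complete system.

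Fix homogeneous coordinates $x_0,\dots,x_r$ on $\p^r$ with $p=[0:\cdots:0:1]$, so that $R=E$ is the exceptional divisor of $\F\to V_0$. A divisor of $\LL_\F$ is defined by a form $F=f_d(x_0,\dots,x_{r-1})+x_r\,f_{d-1}(x_0,\dots,x_{r-1})$, with $f_i$ homogeneous of degree $i$, having a double point at each of the $b$ general nodes $q_1,\dots,q_b$; as one sees on an affine chart of the blow-up at $p$, its strict transform meets $E$ exactly along $\{f_{d-1}=0\}$, with $f_{d-1}$ viewed as a form of degree $d-1$ on $E$. Hence $\RR_\F$ is the image of $\LL_\F$ under the linear projection $[F]\mapsto[f_{d-1}]$. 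Now $\partial F/\partial x_r=f_{d-1}$, and a double point of $\{F=0\}$ forces all first partials of $F$ to vanish there, so $f_{d-1}(q_i)=0$ for every $i$; writing $\bar q_i\in E$ for the point corresponding to the direction of $q_i$ at $p$ (a general point of $E$, since $q_i$ is general), this gives $\RR_\F\subseteq\LL_{r-1,d-1}(1^b)$, the linear system of degree-$(d-1)$ hypersurfaces of $R$ through $\bar q_1,\dots,\bar q_b$.

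For the reverse inclusion — where the hypothesis enters — fix $g\in\LL_{r-1,d-1}(1^b)$ and look for $f_d$ such that $F=f_d+x_r\,g$ has a double point at each $q_i$. After normalizing the last coordinate of $q_i$ to be $1$ and invoking Euler's identity, the vanishing of $F$ and of $\partial F/\partial x_r$ at $q_i$ are automatic (the first from the vanishing of the remaining partials), so the conditions on $f_d$ reduce to the linear system
$$
\frac{\partial f_d}{\partial x_k}(q_i)=-\frac{\partial g}{\partial x_k}(q_i),\qquad 0\le k\le r-1,\ \ 1\le i\le b,
$$
of $br$ equations in $f_d\in H^0(\p^{r-1},\OO(d))$, where $\p^{r-1}=\operatorname{Proj}\C[x_0,\dots,x_{r-1}]\cong E$. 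The associated map $\phi\colon f_d\mapsto\bigl(\tfrac{\partial f_d}{\partial x_k}(q_i)\bigr)_{k,i}$ has kernel $H^0(\LL_{r-1,d}(2^b))$: by Euler, prescribing that all first partials of $f_d$ vanish at a point is the same as prescribing a double point there. By Lemma~\ref{system capLF} one has $h^0(\LL_{r-1,d}(2^b))=h^0(\hat{\LL}_{\F})=\hat{l}_{\F}+1$, and by hypothesis this equals $v(\LL_{r-1,d}(2^b))+1=\binom{d+r-1}{r-1}-br$; hence $\phi$ has rank $\binom{d+r-1}{r-1}-\dim\ker\phi=br$ and is surjective, so the system is solvable and $g\in\RR_\F$. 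Therefore $\RR_\F=\LL_{r-1,d-1}(1^b)$.

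Finally, the nodes $q_1,\dots,q_b$ are general and are chosen independently of the $n-b$ nodes on $\p$ and of the hyperplane $R$; so $\RR_\p$ is a fixed linear subspace of $|\OO_{\p^{r-1}}(d-1)|$, while $\RR_\F=\LL_{r-1,d-1}(1^b)$ is the intersection of the $b$ hyperplanes ``$\bar q_i\in D$'', which are general since the $\bar q_i$ are. For general $\bar q_i$ such a hyperplane does not contain the current subspace (a general point is not a base point of it), so cutting $\RR_\p$ by these hyperplanes one at a time lowers the dimension by one at each step until it reaches $-1$, giving
$$
\dim(\RR_\p\cap\RR_\F)=\max\Bigl\{\dim\RR_\p+\dim\RR_\F-\binom{d+r-2}{r-1}+1,\ -1\Bigr\},
$$
which is precisely transversality of $\RR_\p$ and $\RR_\F$ in $|\OO_{\p^{r-1}}(d-1)|$. (When $b>\binom{d+r-2}{r-1}$ one has $\RR_\F=\varnothing$ and transversality is trivial; but this never occurs here, since $v(\LL_{r-1,d}(2^b))\ge-1$ forces $br\le\binom{d+r-1}{r-1}\le r\binom{d+r-2}{r-1}$.) The one substantial step is the surjectivity of $\phi$, which rests decisively on the assumption $\hat{l}_{\F}=v(\LL_{r-1,d}(2^b))$; the rest is Euler's formula and the genericity of the points.
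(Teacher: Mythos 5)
Your proof is correct, and it reaches the same intermediate goal as the paper --- showing that $\RR_\F$ is the \emph{complete} system $\LL_{r-1,d-1}(1^b)$, after which transversality follows because the $b$ general points $\bar q_i$ cut the linear subspace $\RR_\p$ by general hyperplane sections --- but it establishes that completeness by a different mechanism. The paper first invokes Proposition \ref{per LF non special k(r,d)} (after checking $b\le k(r,d)$) to know $l_\F=\dim\LL_{r,d}(d-1,2^b)$, then computes $r_\F=l_\F-\hat l_\F-1=\binom{r+d-2}{r-1}-1-b$ and concludes by comparing this with the inclusion $\RR_\F\subseteq\LL_{r-1,d-1}(1^b)$ (whose verification it omits). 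You instead prove surjectivity of $\LL_\F\to\LL_{r-1,d-1}(1^b)$ directly: writing $F=f_d+x_r g$, you reduce the node conditions to an inhomogeneous linear system in $f_d$ whose associated linear map $\phi$ has kernel $H^0(\LL_{r-1,d}(2^b))$, and the hypothesis $\hat l_\F=v(\LL_{r-1,d}(2^b))\ge-1$ forces $\operatorname{rank}\phi=br$, i.e.\ surjectivity. This buys you independence from Proposition \ref{per LF non special k(r,d)}: the non-speciality of $\LL_\F$ is not an input to your argument but falls out of it as a corollary, since $\hat l_\F+r_\F+1$ then equals $v(\LL_{r,d}(d-1,2^b))$; so your proof is self-contained modulo the single stated hypothesis, and it also supplies the ``local computation that we omit'' giving the inclusion $\RR_\F\subseteq\LL_{r-1,d-1}(1^b)$. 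The paper's version is shorter on the page only because it outsources these points to the machinery of Section \ref{LF}; both arguments ultimately rest on the same arithmetic fact that the hypothesis pins down $\hat l_\F$ exactly.
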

\begin{proof}
 Notice that $b\leq \left\lfloor \frac{1}{r} {{r+d-1}\choose{r-1}}\right\rfloor\leq k(r,d)$, therefore the system $\LL_{\F}$ is non-special, by Proposition \ref{per LF non special k(r,d)}. Let $p_i=[p_{i,0},\dots,p_{i,r}]\in \F$, $i=1,\dots,b$, be the general double points. 
The restricted system on $R$ is the complete linear system of  hypersurfaces of $\p^{r-1}$ of degree $d-1$ containing $b$ simple general points $q_i=[p_{i,0},\dots,p_{i,r-1}]$, $i=1,\dots,b$, which are the traces on the exceptional divisor $R$ of the $b$ lines through the $(d-1)$-point $p$,
 that we blew up, and the  points $p_1,\dots, p_b$: $\RR_{\F}=\LL_{r-1,d-1}(1^b)$: indeed
a local computation that we omit proves that $\RR_\F\subseteq \LL_{r-1,d-1}(1^b)$, moreover $r_{\F}  =  l_{\F}-\hat{l}_{\F}-1 ={{r+d-2}\choose{r-1}}-1-b.$\\
Now, if $\LL_{\p}$ is empty, transversality is trivial, being $\RR=\emptyset$.  So, assume  that $\LL_{\p}\neq \emptyset$. The general section of $\RR$ is  a section of $\RR_\p$ that vanishes at $q_1,\dots,q_b$ which are in general position in $R$. Therefore $\RR_\p$ and $\RR_\F$ intersect transversally in $R$ and $\dim(\RR)=\max\{-1,\dim(\RR_\F)-b\}$.
\end{proof}


\subsection{Quartics}\label{quartics}
This section is devoted to the analisys of the case $d=4$.
\subsubsection{Quartics in $\p^3$.}
If $n > 9$ the  system is  empty, being $\dim(\LL_{3,4}(2^9))=0$ (see Section \ref{specials}). For $n=8$, we prove non-speciality of the corresponding linear system performing a $(1,4)$-degeneration: $\LL_{\F}={\LL}_{3,4}(3,2^4) , \hat{\LL}_{\F}={\LL}_{3,4}(4,2^4)\cong {\LL}_{2,4}(2^4), 
\LL_{\p}={\LL}_{3,3}(2^4) , \hat{\LL}_{\p}={\LL}_{3,2}(2^4)=\emptyset$.
The system $\LL_{\p}$ is non-special (as we will see in Section \ref{cubics}), while the system $\LL_{\F}$ is non-special by Proposition \ref{per LF non special k(r,d)}; $\RR_{\F}$ is the complete series $\LL_{2,3}(1^4)$ and the restricted systems intersect transversally (see the proof of  Lemma \ref{transversality}). Hence
$l_0  =  \dim(\RR)+\hat{l}_{\p}+\hat{l}_{\F}+2 =  v(\LL_{3,4}(2^8))$.
It follows  that also the system of quartic surfaces of $\p^3$ having $n$ nodes, with $n<8$, is non-special.

\subsubsection{Quartics in $\p^4$.}
The systems of quartics with $n$ nodes  in $\p^4$, for  $n>14$, is  empty, being $\dim(\LL_{4,4}(2^{14}))=0$ (see Section \ref{specials}).
Performing a $(1,8)$-degeneration of $\p^4$, we prove that the system $\LL_{4,4}(2^{13})$ is non-special, as in the case of $\p^3$.
As consequence, for  a smaller number of general nodes, the system of quartics of $\p^4$ is non-special.

\subsubsection{Quartics in $\p^r$, $r\geq5$.}
Let $n^-(r,4)\leq n \leq n^+(r,4)$. We will prove non-speciality of the linear  system $\LL_{r,4}(2^n)$, 
 performing a $(1,n-r-1)$-degeneration.
The system $\hat{\LL}_{\F}=\LL_{r,4}(4,2^{n-r-1})$ is empty, in fact it has dimension 
$\dim(\LL_{r-1,4}(2^{n-r-1}))=-1$ (use induction on $r$ and check that the virtual dimension is negative);
furthermore the system $\LL_{\F}=\LL_{r,4}(3,2^{n-r-1})$ is non-special by Lemma \ref{LF non-special quartics}, being $n-r-1\leq k(r)$.
The system $\LL_{\p}=\LL_{r,3}(2^{r+1})$ is non-special, under the assumption that Alexander-Hirschowitz Theorem holds for cubics (see Section \ref{cubics}).
The kernel system 
$
\hat{\LL}_{\p}=\LL_{r,2}(2^{r+1})
$
of quadric hypersurfaces with $r+1$ double points is empty, therefore the restriction map $\LL_{\p} \hookrightarrow \RR_{\p}\subseteq |\OO_{\p^{r-1}}(3)|$ is injective. 
 If a cubic has $k$ nodes, then it must contains all the ${k\choose2}$ lines joining the points. Consequentely, when we restrict to the hyperplane $R$, the image of the cubics in $\LL_{\p}$ must contain the traces of these lines as base points; so we get
$
\RR_{\p}\subseteq \LL_{r-1,3}(1^{{r+1}\choose2}).
$
Actually, these ${{r+1}\choose 2}$ points give  independent conditions, and therefore $\RR_{\p}$ is the complete series.
\begin{prop}[Transversality for quartics]
In the setting of above, the system $\RR_{\p}$ is the complete linear system of cubics of $R$ with ${{r+1}\choose 2}$ base points and $\dim(\RR_\p)={{r+2}\choose{r-1}}-1-{{r+1}\choose2}$.
\end{prop}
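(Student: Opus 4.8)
The plan is to show that the $\binom{r+1}{2}$ points on $R$, obtained as traces of the lines joining pairs among the $r+1$ general double points on $\p$, impose independent conditions on $|\OO_{\p^{r-1}}(3)|$; since we already know from the discussion preceding the statement that $\RR_\p\subseteq\LL_{r-1,3}\bigl(1^{\binom{r+1}{2}}\bigr)$ and that the restriction $\LL_\p\hookrightarrow\RR_\p$ is injective, this will pin down $\dim(\RR_\p)$. First I would set up coordinates: place the $r+1$ general points $p_1,\dots,p_{r+1}\in\p\cong\p^r$, and recall that $R$ is a hyperplane of $\p$. The trace on $R$ of the line $\overline{p_ip_j}$ is a point $q_{ij}$, and as the $p_i$ vary generically the $\binom{r+1}{2}$ points $q_{ij}$ are a specific configuration in $R\cong\p^{r-1}$ — not a general configuration, which is exactly why independence needs an argument.

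The key observation is a dimension count combined with the known non-speciality of $\LL_\p$. We have $l_\p=\dim\LL_{r,3}(2^{r+1})$, which is non-special (assuming Alexander--Hirschowitz for cubics, as permitted), and $\hat\LL_\p=\LL_{r,2}(2^{r+1})=\emptyset$, so the restriction map to $R$ is injective and $r_\p=l_\p$. A direct binomial computation shows that $v(\LL_{r,3}(2^{r+1}))=\binom{r+3}{r}-1-(r+1)(r+1)$, and one checks this equals $\binom{r+2}{r-1}-1-\binom{r+1}{2}$, which is precisely $v\bigl(\LL_{r-1,3}(1^{\binom{r+1}{2}})\bigr)$. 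Hence once I show both containments $\RR_\p\subseteq\LL_{r-1,3}\bigl(1^{\binom{r+1}{2}}\bigr)$ (already established) and that $\LL_{r-1,3}\bigl(1^{\binom{r+1}{2}}\bigr)$ is non-empty of the expected dimension, the two systems have the same dimension and must coincide, giving the claimed formula. So the proof reduces to: the $\binom{r+1}{2}$ points $q_{ij}$ impose independent conditions on cubics of $\p^{r-1}$.

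To prove independence of the $q_{ij}$, I would proceed by induction on $r$ (equivalently, peel off points), or more efficiently exploit the explicit geometry: the points $q_{ij}$ for fixed $i$, as $j$ ranges, all lie on the image in $R$ of the projection from $p_i$, and one can choose explicit cubic forms vanishing at all $q_{ij}$ except one. A clean way is to note that projecting $\p^r$ from $p_{r+1}$ to a hyperplane realizes the $q_{i,r+1}$ as $r$ general points and the remaining $q_{ij}$ ($i,j\le r$) as the analogous trace-configuration for $r$ points in one lower dimension, so an inductive/degenerative argument on the pair $(r,\#\text{points})$ applies once a base case ($r=2$ or $r=3$, where the configuration is small and can be checked by hand) is settled. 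Alternatively, one invokes that the union of the line-configuration $\bigcup_{i<j}\overline{p_ip_j}$ is the base locus of $\LL_{r,3}(2^{r+1})$ and transversality of $\p$ with $R$ together with the non-speciality of $\LL_\p$ forces the trace points to be as independent as the cohomology allows — this is really the substance of the ``actually, these points give independent conditions'' assertion in the text.

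The main obstacle is precisely this last independence claim: the points $q_{ij}$ form a \emph{non-generic} configuration (they satisfy many collinearity and incidence relations inherited from the line arrangement), so one cannot simply cite non-speciality of general points. The honest route is the numerical coincidence $v(\LL_\p)=v\bigl(\LL_{r-1,3}(1^{\binom{r+1}{2}})\bigr)$ forcing equality of dimensions via the injection $\LL_\p\hookrightarrow\RR_\p\subseteq\LL_{r-1,3}\bigl(1^{\binom{r+1}{2}}\bigr)$: injectivity gives $\dim\RR_\p\ge l_\p=v(\LL_\p)=v\bigl(\LL_{r-1,3}(1^{\binom{r+1}{2}})\bigr)\ge\dim\LL_{r-1,3}\bigl(1^{\binom{r+1}{2}}\bigr)\ge\dim\RR_\p$, so all inequalities are equalities, $\RR_\p=\LL_{r-1,3}\bigl(1^{\binom{r+1}{2}}\bigr)$ is non-special, and the dimension formula follows. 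I would present the binomial identity check as the one genuinely computational step, and the rest as formal consequences of injectivity plus the cubic case of Alexander--Hirschowitz.
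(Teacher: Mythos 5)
There is a genuine gap, and it sits exactly at the step you flag as the ``honest route''. In your chain
$\dim\RR_\p\ge l_\p=v(\LL_\p)=v\bigl(\LL_{r-1,3}(1^{\binom{r+1}{2}})\bigr)\ge\dim\LL_{r-1,3}\bigl(1^{\binom{r+1}{2}}\bigr)\ge\dim\RR_\p$,
the inequality $v\bigl(\LL_{r-1,3}(1^{N})\bigr)\ge\dim\LL_{r-1,3}(1^{N})$ (with $N=\binom{r+1}{2}$) points the wrong way. For a linear system with imposed simple base points one always has $\dim = v + h^1 \ge v$: conditions can fail to be independent, which \emph{raises} the actual dimension above the virtual one, but never the reverse. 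So the numerical coincidence $v(\LL_\p)=v\bigl(\LL_{r-1,3}(1^{N})\bigr)$ (which you verify correctly) together with the injection $\LL_\p\hookrightarrow\RR_\p\subseteq\LL_{r-1,3}(1^{N})$ only yields $\dim\RR_\p=v\le\dim\LL_{r-1,3}(1^{N})$. This is perfectly consistent with the $q_{ij}$ failing to impose independent conditions, in which case $\LL_{r-1,3}(1^{N})$ would be strictly larger than $\RR_\p$ and $\RR_\p$ would \emph{not} be a complete series --- which is precisely the conclusion the proposition needs in order to get transversality with $\RR_\F$ afterwards. The independence of the trace points cannot be extracted formally from non-speciality of $\LL_\p$; it is genuinely extra geometric content about a highly non-generic configuration, as you yourself observe earlier in the write-up.

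The fix is the argument you sketch and then set aside in your third paragraph, and it is what the paper actually does. One first reduces from cubics to quadrics (a finite scheme imposing independent conditions in degree $2$ does so in degree $3$), and then proves independence on quadrics by induction on $r$: the $r$ points $p_1,\dots,p_r$ span a hyperplane $\Pi\subset\p$, all the $q_{ij}$ with $i,j\le r$ lie on the $\p^{r-2}=\Pi\cap R$, and splitting off this hyperplane of $R$ from the system of quadrics leaves the system of hyperplanes of $R$ through the $r$ remaining points $q_{i,r+1}$, which are the projections of $p_1,\dots,p_r$ from $p_{r+1}$ and hence in general position; the points on $\Pi\cap R$ are handled by the inductive hypothesis, with the base case $r=3$ ($\LL_{2,2}(1^6)=\pi'+\LL_{2,1}(1^3)=\emptyset$) checked directly. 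Had you carried this out as the core of the proof rather than as an optional alternative, the argument would be complete; as written, the proof rests on the reversed inequality and does not go through.
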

\begin{proof}
We have to prove that the ${{r+1}\choose 2}$ points on $R$, traces of the lines joining the $r+1$ nodes $p_1,\dots,p_{r+1}$ specialized on the component $\p$, impose independent conditions. If we prove that this is true for quadrics, it will be true  for cubics.
The case $r=3$ is easy: let $p_1,\dots,p_4$ points in $\p\cong \p^3$: three of them, say $p_1,p_2,p_3$, span a plane $\pi$, which cuts a line $\pi'$ on  $R\cong \p^2$; on this line we will have the three distinct points given as traces of the three lines $<p_i,p_j>, \ i\neq j, \ i,j=1,2,3$. The line $\pi'$ splits off the system of conics through these three points, thus
$$
\LL_{2,2}(1^6)=\pi'+\LL_{2,1}(1^3),
$$ 
where the three base points of the system on the right are the projection of $p_1,p_2,p_3$ from $p_4$ on $R$ and they will not lie on a line, by generality. So our system is empty.
For $r>3$, apply induction and use the same argument. 
\end{proof}
From this follows that $\RR_\p$ and $\RR_\F$ intersect properly in $R$, being $\RR_\p$ the complete series.
In particular, the system $\RR$ consists of the sections of $\RR_\F$ vanishing at the ${{r+1}\choose2}$  base points of $\RR_\p$, thus 
$
\dim( \RR)=\max\left\{l_\F-{{r+1}\choose2},-1\right\}=e(\LL)
$
and  we conclude applying Formula (\ref{formula l_0}).

\begin{rmk}
This discussion does not apply if $r=3,4$. Indeed the kernel on the component $\F$ would be isomorphic to $\LL_{2,4}(2^5)$ and $\LL_{3,4}(2^9)$ respectively, that are special and in particular nonempty.
\end{rmk}


\subsection{Proof of Theorem \ref{A-H}, part I}

The goal of this section is to apply the above degeneration technique  to linear systems of hypersurfaces of $\p^r$, with $r\geq3$, of degree $d\geq5$ with a collection of $n$ nodes in general position, with $n^- \leq n \leq n^+$.
 Let us set 
\begin{eqnarray}\label{beta}
b_0(r,d):=\frac{1}{r}{{r+d-1}\choose{r-1}}=\left\lfloor \frac{1}{r}{{r+d-1}\choose{r-1}}\right\rfloor +\frac{\beta}{r}, \ \ \ \ \beta \in \{0,\dots,r-1\},
\end{eqnarray}
and assume that $b_0\in \Z$, i.e. that $\beta=0$. Perform a $(1,b)$-degeneration of $\p^r$ and $\LL$, with $b=b_0$.

\begin{prop}\label{B in Z}
Same  notation of above.  If the linear systems $\LL_{r-1,d}(2^b)(\cong\hat{\LL}_\F)$, $\LL_\p=\LL_{r,d-1}(2^{n-b})$ and $\hat{\LL}_\p=\LL_{r,d-2}(2^{n-b})$ are non-special, then the linear system $\LL_{r,d}(2^n)$ is non-special.
\end{prop}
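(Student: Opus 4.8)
The plan is to compute $l_0$, the dimension of the degenerate system $\LL_0$, using Formula (\ref{formula l_0}), and to verify that it equals $e(\LL_{r,d}(2^n))$; by the first Lemma of Section \ref{Ideg} this gives non-speciality of the original system. Since $\beta=0$ we have $b=b_0=\frac1r\binom{r+d-1}{r-1}\in\Z$, which is exactly the numerical condition under which Transversality Lemma I applies: indeed $\hat\LL_\F\cong\LL_{r-1,d}(2^b)$ and with this choice of $b$ its virtual dimension is $\binom{r+d-1}{r-1}-1-rb=-1$, so the hypothesis $\hat l_\F=v(\LL_{r-1,d}(2^b))\geq -1$ holds (here we use that $\LL_{r-1,d}(2^b)$ is non-special, which is among our assumptions, or follows from the induction on $r$). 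Consequently $\RR_\F$ is the complete linear system $\LL_{r-1,d-1}(1^b)$ on $R\cong\p^{r-1}$, and $\RR_\p$ and $\RR_\F$ are transversal in $|\OO_{\p^{r-1}}(d-1)|$.

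First I would record the dimensions of the four auxiliary systems. On the $\F$-side, $\hat l_\F=-1$ (so $\hat\LL_\F$ is empty), while $\LL_\F=\LL_{r,d}(d-1,2^b)$ is non-special with $l_\F=r_\F=\binom{r+d-2}{r-1}-1-b$, using Proposition \ref{per LF non special k(r,d)} together with the bound $b\leq\lfloor\frac1r\binom{r+d-1}{r-1}\rfloor\leq k(r,d)$ checked in the proof of Lemma \ref{transversality}. On the $\p$-side, $\LL_\p=\LL_{r,d-1}(2^{n-b})$ and $\hat\LL_\p=\LL_{r,d-2}(2^{n-b})$ are non-special by hypothesis, so $\hat l_\p=\max\{v(\LL_{r,d-2}(2^{n-b})),-1\}$ and $l_\p=\max\{v(\LL_{r,d-1}(2^{n-b})),-1\}$; hence $r_\p=l_\p-\hat l_\p-1$. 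Then, since the restricted systems are transversal,
$$
\dim(\RR)=\max\Bigl\{r_\p+r_\F-\binom{d+r-2}{r-1}+1,\,-1\Bigr\},
$$
and plugging into (\ref{formula l_0}) gives $l_0=\dim(\RR)+\hat l_\p+\hat l_\F+2=\dim(\RR)+\hat l_\p+1$.

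The final step is the bookkeeping: substitute the above expressions and check the identity
$$
\dim(\RR)+\hat l_\p+1=e(\LL_{r,d}(2^n)),
$$
where $v(\LL_{r,d}(2^n))=\binom{r+d}{r}-1-n\binom{r+1}{r}=\binom{r+d}{r}-1-n(r+1)$. The key arithmetic input is that $n(r+1)$ is, up to the floor, $\binom{r+d}{r}$-sized, and that $(n-b)$ appears symmetrically in $v(\LL_{r,d-1}(2^{n-b}))$ and $v(\LL_{r,d-2}(2^{n-b}))$; combining with Pascal's identity $\binom{r+d}{r}-\binom{r+d-1}{r-1}=\binom{r+d-1}{r}$ and $rb=\binom{r+d-1}{r-1}$ one reduces everything to a telescoping of binomial coefficients. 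One must handle two regimes separately: the "generic" case where all the relevant virtual dimensions are $\geq -1$ so the max's are attained by the virtual values and the identity is a pure polynomial identity in $n,r,d$; and the boundary cases where $\LL_\p$, $\hat\LL_\p$, or $\RR$ is empty, which have to be argued by the monotonicity remarks at the start of Section \ref{Ideg} (non-speciality for $n^-$ points propagates downward, emptiness for $n^+$ propagates upward), so that it suffices to treat $n^-\le n\le n^+$.

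The main obstacle I expect is precisely this last arithmetic reconciliation — verifying that the recursive formula closes up to the floor functions hidden in $n^\pm$ and in $b_0$, and confirming that in every boundary regime the ``$\max$'' in $\dim(\RR)$ and in $e(\LL)$ are attained consistently. Transversality itself is not the difficulty here, since it has already been isolated in Lemma \ref{transversality}; the delicate point is checking that the choice $b=b_0$ simultaneously makes $\hat\LL_\F$ empty (needed for the clean formula) and keeps $n-b$ in the range where the inductive hypotheses on $\LL_\p$ and $\hat\LL_\p$ are available.
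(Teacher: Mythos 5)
Your proposal is correct and follows essentially the same route as the paper: perform the $(1,b_0)$-degeneration, use that $\hat{\LL}_\F\cong\LL_{r-1,d}(2^{b})$ is empty since $rb=\binom{r+d-1}{r-1}$, invoke Transversality Lemma I so that $\RR_\F$ is the complete series with $b$ general base points, and close with formula (\ref{formula l_0}) and Pascal's identity. The only (cosmetic) difference is that the paper short-circuits the bookkeeping by observing directly that $\hat v_\p\le -1$ and $v_\p>-1$, so that $\hat{\LL}_\p=\emptyset$, $r_\p=l_\p$ and $\dim(\RR)=\max\{l_\p-b,-1\}$, whereas you carry $\hat l_\p$ along symbolically (where it cancels in the end).
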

\begin{proof}
The system $\hat{\LL}_{\F}\cong \LL_{r-1,d}(2^{b})$ has dimension $-1$. The system $\LL_{\F}=\LL_{r,d}(d-1,2^b)$ is non-special, by Proposition
\ref{per LF non special k(r,d)}            and  cuts the complete series $\RR_{\F}=\LL_{r-1,d-1}(1^b)$ on $R$, by Lemma \ref{transversality}.  Furthermore the restricted systems intersect transversally.
The system on the component $\p$ is non special, by assumption and it is  nonempty, because one computes $v_{\p}>-1$.
The kernel system $\hat{\LL}_{\p}$ is non-special and empty, because $\hat{v}_\p\leq-1$.
Moreover the dimension of the intersection $\RR$ of the restricted systems on $R$ is
$
\dim (\RR)=\max\{l_{\p}-b,-1\}= e(\LL).
$
 Now, using formula (\ref{formula l'_0}), we get 
$
l_0  =  \dim(\RR)+\hat{l}_{\p}+\hat{l}_{\F}+2=\dim(\RR)=e(\LL).
$
 Therefore, by  upper-semicontinuity, the system $\LL$ is non-special.
\end{proof}


\section{The second degeneration}\label{IIdeg}

Specializing $b$ double points  on  $\F$ and the others on $\p$ does not cover all the cases. Trying to prove  non-speciality of  a given linear system $\LL$, in some cases
 we are not  able to find an integer $b$ such that the limiting system $\LL_0$ has dimension equal to  $e(\LL)$. 
 In those cases a arithmetic obstruction prevents us from finding such a $b$.
 For example when $\LL$ is expected to be empty, namely if $n=n^+=\left\lceil \frac{1}{r+1}{{r+d}\choose{r}}\right\rceil=\frac{1}{r+1}{{r+d}\choose r}+\frac{l}{r+1}$, $l\in\{0,\dots,r\}$ and $v(\LL)=-1-l$, we would like to find a specialization  such that both kernel systems and also the intersection of the restricted systems are empty. 
The minimal integer $b$ such that $\hat{\LL}_{\F}$ is empty is 
$
b=\left\lceil \frac{1}{r}{{r+d-1}\choose{r-1}}\right\rceil=\frac{1}{r}{{r+d-1}\choose{r-1}}+\frac{l'}{r}, l'\in\{0,\dots,r-1\}.
$
But we would have a problem with the dimension of the intersection of the restricted systems $\RR$ on $R$ (which we wish to be empty); indeed
$
\dim(\RR)=\max\{-1,l_{\p}-b\}  =\max\{-1,   -1-l + l'\}
$
and we are not able to check if $l'\leq l$.
On the other hand, if we choose 
$
b=\left\lfloor \frac{1}{r}{{r+d-1}\choose{r-1}}\right\rfloor,
$
then $\hat{\LL}_{\F}$, which has dimension $\dim(\LL_{r-1,d}(2^b))$, is nonempty.

Hence we use another approach in order to overcome the problem. It consistes in degenerating the system $\LL_0$ on the central fiber $X_0$ to a system $\LL'_0$ such that some of the points  of $\F$ \emph{approach} $R$.\\
 Let $\Delta'$
be a complex disc around the origin. Consider the trivial family $\mathcal{Z}=Z\times\Delta'\rightarrow \Delta'$
 with reducible fibers $Z_s=\F_s\cup\p_s$, where $\F_s=\F$ is isomorphic to $\p^r$ blown up at a point, $\p_s=\p$ is isomorphic to $\p^r$ and $\F_s\cap\p_s=R_s\cong\p^{r-1}$, 
for every $s\in\Delta'$. 
Consider on $Z_s$, $s\neq0$, the linear system $\LL'_s:=\LL_0$, where $\LL_0$ is the flat limit of $\LL_t=\LL_{r,d}(2^n)$, with respect to the  first degeneration. Such a system is given by two linear systems $\LL'_{\p_s}$ and 
 $\LL'_{\F_s}$ on the two components 
that agree on the intersection $R_s$. The system on $\p_s$ (on $\F_s$) restricts to a system $\RR'_{\p_s}$ 
($\RR'_{\F_s}$ respectively) and the  kernel is $\hat{\LL}'_{\p_s}$ 
($\hat{\LL}'_{\F_s}$ respectively).
We have the following identities:
$$
\hat{\LL}'_{\F_s}=\hat{\LL}_{\F}, \ \hat{\LL}'_{\p_s}=\hat{\LL}_{\p}, 
\  \RR'_{\F_s}=\RR'_{\F}, \ \RR'_{\p_s}=\RR'_{\p}, \textrm{ for } s\neq0
$$

Now, let 
$\beta \in \N$ such that 
 $\beta\leq b$.
 Consider the scheme on the central fiber given by
$n-b$ double points in $\p_0\setminus R_0$,
 $b-\beta$ in $\F_0\setminus R_0$ 
  and $\beta$ in $R_0$:
 we can consider these
 nodes as limit of the $n$ general nodes in $Z_s$ ($n-b$ in $\F_s$ and $b$ in $\p_s$, $s\neq0$). So, on the central fiber $Z_0$ the systems $\LL'_{\p_0}$, $\hat{\LL}'_{\p_0}$ and $\LL'_{\F_0}$ are still the same, while 
$$
\hat{\LL}'_{\F_0}=\LL_{r,d}(d,2^{b-\beta},1^{\beta})\cong\LL_{r-1,d}(2^{b-\beta},1^{\beta})
$$
with the following restriction sequences:
$$
\begin{array}{lllllll}
0& \rightarrow & \hat{\LL}'_{\p_0} & \rightarrow & \LL'_{\p_0} & \rightarrow & \RR'_{\p_0}\subseteq|\OO_{\p^{r-1 }}(d-1)| \\
0& \rightarrow & \hat{\LL}'_{\F_0} & \rightarrow & \LL'_{\F_0} & \rightarrow & \RR'_{\F_0}\subseteq\LL_{r-1,d-1}(2^{\beta}) 
\end{array}
$$
We denote by $\hat{v}'_{\p_0}$, $v'_{\p_0}$, $\hat{v}'_{\F_0}$, $v'_{\F_0}$ and $\hat{l}'_{\p_0}$, $l'_{\p_0}$, $\hat{l}'_{\F_0}$, $l'_{\F_0}$ the virtual and the actual dimensions. Let $\RR'_0:=\RR'_{\p_0}\cap\RR'_{\F_0}$.
As in Section \ref{Ideg}, we obtain a recursive formula for the dimension of $\LL'_0$:
\begin{eqnarray}\label{formula l'_0}
l'_0=\hat{l}'_{\p_0}+\hat{l}'_{\F_0}+\dim(\RR'_0)+2.
\end{eqnarray}

\begin{lemma}
Keeping the same notations as above, if there are integers $b,\beta$ such that $l'_0=e(\LL)$, then $\LL$ is non-special
\end{lemma}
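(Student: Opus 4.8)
The plan is to obtain the statement from two successive applications of the upper semicontinuity of $h^0$ in a flat family, exactly in the spirit of the lemma recorded just after the description of the $(1,b)$-degeneration. Recall that the first degeneration $\pi_2\colon\mathcal{X}\to\Delta$ already yields $l_0\ge\dim(\LL_{r,d}(2^n))$, since $\LL_0$ on $X_0=\F\cup\p$ is a flat limit of $\LL_t=\LL_{r,d}(2^n)$ on the general fiber. I would now apply the same principle to the second family $\mathcal{Z}=Z\times\Delta'\to\Delta'$ of Section \ref{IIdeg}: for $s\neq0$ the fiber $Z_s$ carries $\LL'_s:=\LL_0$, so $\dim(\LL'_s)=l_0$, while as $s\to0$ the scheme of $n$ nodes on $Z_s$ degenerates flatly to the prescribed scheme on $Z_0$ ($n-b$ double points in $\p_0\setminus R_0$, $b-\beta$ in $\F_0\setminus R_0$, and $\beta$ in $R_0$); the corresponding limit linear system is $\LL'_0$, of dimension $l'_0$ given by formula (\ref{formula l'_0}). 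Upper semicontinuity then gives $l'_0\ge l_0$.

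Now suppose $b,\beta$ are chosen so that $l'_0=e(\LL)$. On the other hand, the actual dimension of any linear system always dominates the expected one: from the cohomology exact sequence in the Introduction one has $h^0(\p^r,\LL)\ge h^0(\p^r,\LL_{r,d})-h^0(Z,{\LL_{r,d}}_{|Z})$, so $\dim(\LL)\ge v(\LL)$, and trivially $\dim(\LL)\ge-1$, whence $\dim(\LL_{r,d}(2^n))\ge e(\LL)$ unconditionally. Combining everything,
$$
e(\LL)=l'_0\ \ge\ l_0\ \ge\ \dim(\LL_{r,d}(2^n))\ \ge\ e(\LL),
$$
so all the inequalities are equalities; in particular $\dim(\LL_{r,d}(2^n))=e(\LL)$, i.e. $\LL$ is non-special, as desired.

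The point requiring care is the flatness underpinning the second semicontinuity step: one must know that the construction of Section \ref{IIdeg} genuinely presents $\LL'_0$, with the pieces $\hat{\LL}'_{\p_0}$, $\hat{\LL}'_{\F_0}\cong\LL_{r-1,d}(2^{b-\beta},1^{\beta})$, $\RR'_{\p_0}$, $\RR'_{\F_0}\subseteq\LL_{r-1,d-1}(2^{\beta})$ as described, as a flat limit of $\LL'_s=\LL_0$ along $\Delta'$ — that is, that the line bundle (constant here) and the fat-point subscheme deform flatly together, with the Horace-type residuation on the $\beta$ points landing on $R_0$ correctly accounted for — so that $h^0$ is upper semicontinuous on $\Delta'$. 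This is modeled on the degenerations of \cite{CM1} and \cite{CM2}; once it is granted, the proof reduces to the displayed chain of inequalities.
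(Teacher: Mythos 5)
Your proof is correct and is exactly the argument the paper intends (the lemma is stated without proof there, but the surrounding text makes clear it rests on the same two-step semicontinuity chain $e(\LL)=l'_0\ge l_0\ge\dim(\LL)\ge e(\LL)$ that you write down). Your closing remark about the flatness of the second family is the right caveat and matches the paper's implicit reliance on the constructions of \cite{CM1} and \cite{CM2}.
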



\subsection{Second transversality Lemma}\label{transIIdeg}
The intersection $\RR'_0$ is contained in the linear system of those divisors of $\RR'_{\p_0}$ that are singular at  $\beta$ further general points of $R_0$, the ones imposed by $\RR'_{\F_0}$, and satisfying the remaining matching conditions.  Let us denote by
$$
{\hat{\LL'}}^{m}_{\p_0} \subseteq \hat{\LL}'_{\p_0}, \ {\LL'}_{\p_0}^{m}\subseteq \LL'_{\p_0} \textrm{ and } {\RR'}_{\p_0}^m \subseteq \RR'_{\p_0},
$$
the systems defined by the matching conditions imposed by $\RR_\F$ to $\RR_\p$.
The first step is to prove that if we impose our $\beta$ nodes to $\RR'_{\p_0}$, the resulting system $\bar{\RR}'_{\p_0}:=\RR'_{\p_0}(2^{\beta})$ is non-special, i.e. $
\dim(\bar{\RR}'_{\p_0})=\dim(\RR'_{\p_0})-r\beta.$ 
Notice that ${\RR'}_{\p_0}^m\subseteq\bar{\RR}'_{\p_0}\subseteq \RR'_{\p_0}$.
Define  $\bar{\LL}'_{\p_0}:=\LL_{r,d-1}(2^{n-b+\beta})\subset\LL'_{\p_0}$ to be the linear system of hypersurfaces of degree $d-1$  of $\p_0$ with $n-b$ general nodes on $\p_0$ and $\beta$ general nodes on $R_0\subseteq\p_0$. Recall that $R_0$ is a general hyperplane for $\p_0$ and notice that if  $\beta<r$ then the $n-b+\beta$ nodes are in general position in $\p_0$.

\begin{lemma}[Transversality Lemma II]\label{transversality 2}
In the above construction, assume that $h^0(\p_0, \hat{\LL}'_{\p_0})=0$, $h^1(\p_0, \hat{\LL}'_{\p_0})>0$, that $h^1(\p_0,\bar{\LL}'_{\p_0})=0$ and that $\beta <r$.
 Then the linear system $\bar{\RR}'_{\p_0}$ is non-special.
\end{lemma}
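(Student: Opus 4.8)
The plan is to extract the dimension of $\bar{\RR}'_{\p_0}$ from the two linear systems
$\LL'_{\p_0}=\LL_{r,d-1}(2^{n-b})$ and $\bar{\LL}'_{\p_0}=\LL_{r,d-1}(2^{n-b+\beta})$ on $\p_0\cong\p^r$,
by comparing carefully a double point of $\p^r$ that happens to lie on the hyperplane $R_0$
with a genuine double point of $R_0\cong\p^{r-1}$. Let $q_1,\dots,q_\beta\in R_0$ be the $\beta$
general points carrying the migrated nodes, let $Z$ be the scheme of the $n-b$ double points on
$\p_0\setminus R_0$ and $Z'$ the scheme of the $\beta$ double points supported at the $q_j$; note
$Z\cap R_0=\emptyset$, and that the hypothesis $\beta<r$ guarantees that the $n-b+\beta$ nodes are
in general position, so that the hypothesis $h^1(\p_0,\bar{\LL}'_{\p_0})=0$ genuinely applies to
$\bar{\LL}'_{\p_0}$.

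First I would use $h^0(\p_0,\hat{\LL}'_{\p_0})=0$. Since $\hat{\LL}'_{\p_0}=\LL_{r,d-2}(2^{n-b})$ is
the kernel of the restriction of $\LL'_{\p_0}$ to $R_0$, this vanishing says that
$H^0(\p_0,\LL'_{\p_0})\to H^0(R_0,\OO_{R_0}(d-1))$ is injective; it therefore identifies
$\LL'_{\p_0}$ with $\RR'_{\p_0}$ (so $h^0(R_0,\RR'_{\p_0})=h^0(\p_0,\LL'_{\p_0})$) and identifies the
subsystem $\bar{\RR}'_{\p_0}=\RR'_{\p_0}(2^{\beta})$ with
$\{F\in H^0(\p_0,\LL'_{\p_0}):\ F|_{R_0}\ \text{is singular at each }q_j\}$. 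In local coordinates
at $q_j$ with $R_0=\{x_0=0\}$, asking $F|_{R_0}$ to be singular at $q_j$ is the vanishing of $F$
and of its $r-1$ tangential derivatives, whereas asking $F$ itself to be singular at $q_j$ in
$\p_0$ imposes in addition the single normal condition $\partial F/\partial x_0(q_j)=0$. Hence
$H^0(\p_0,\bar{\LL}'_{\p_0})$ is obtained from the space of $\bar{\RR}'_{\p_0}$ by $\beta$ further
linear conditions, so that
$$
h^0(R_0,\bar{\RR}'_{\p_0})\le h^0(\p_0,\bar{\LL}'_{\p_0})+\beta .
$$

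Next I would feed in $h^1(\p_0,\bar{\LL}'_{\p_0})=0$. Since $Z$ and $Z'$ are disjoint there is an
exact sequence $0\to\mathcal{I}_{Z\cup Z'}(d-1)\to\mathcal{I}_{Z}(d-1)\to\OO_{Z'}\to0$ with
$\OO_{Z'}$ of length $(r+1)\beta$; taking cohomology and using
$h^1(\mathcal{I}_{Z\cup Z'}(d-1))=h^1(\p_0,\bar{\LL}'_{\p_0})=0$ and $h^1(\OO_{Z'})=0$ one reads
off both that $h^1(\p_0,\LL'_{\p_0})=0$ and that $Z'$ imposes the full $(r+1)\beta$ independent
conditions on $H^0(\p_0,\LL'_{\p_0})$, so that
$$
h^0(\p_0,\bar{\LL}'_{\p_0})=h^0(\p_0,\LL'_{\p_0})-(r+1)\beta=h^0(R_0,\RR'_{\p_0})-(r+1)\beta .
$$
Combining with the previous inequality gives $h^0(R_0,\bar{\RR}'_{\p_0})\le h^0(R_0,\RR'_{\p_0})-r\beta$.
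Conversely $\bar{\RR}'_{\p_0}$ is obtained from $\RR'_{\p_0}$ by imposing a double point of
$R_0\cong\p^{r-1}$ at each of the $\beta$ points $q_j$, that is, at most $r\beta$ linear
conditions, so $h^0(R_0,\bar{\RR}'_{\p_0})\ge\max\{h^0(R_0,\RR'_{\p_0})-r\beta,\,0\}$. Since
$h^0(\p_0,\bar{\LL}'_{\p_0})\ge0$ forces $h^0(R_0,\RR'_{\p_0})=h^0(\p_0,\LL'_{\p_0})\ge(r+1)\beta$,
that maximum equals $h^0(R_0,\RR'_{\p_0})-r\beta$, and equality holds throughout:
$\dim(\bar{\RR}'_{\p_0})=\dim(\RR'_{\p_0})-r\beta$, i.e. $\bar{\RR}'_{\p_0}$ is non-special.

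The step that will need the most care is the local bookkeeping of the normal derivative in the
second paragraph: it is exactly this one extra condition per point that converts the count
$h^0(\bar{\LL}'_{\p_0})=h^0(\LL'_{\p_0})-(r+1)\beta$, which is a statement about $\p^r$, into the
required count on the hyperplane $R_0$, and getting that loss to be precisely $\beta$ is where the
hypotheses must be used sharply. I should also note that the remaining hypothesis
$h^1(\p_0,\hat{\LL}'_{\p_0})>0$ is not needed for this particular estimate: it merely records that
$\hat{\LL}'_{\p_0}$ is special, which is precisely the situation in which the simpler first
degeneration fails and this refinement is called for.
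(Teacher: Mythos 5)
Your proof is correct, and it arrives at the required count $\dim(\bar{\RR}'_{\p_0})=\dim(\RR'_{\p_0})-r\beta$ from the same two inputs the paper uses, but by a different mechanism. The paper restricts both $\LL'_{\p_0}$ and $\bar{\LL}'_{\p_0}$ to $R_0$, records $h^0(\bar{\LL}'_{\p_0})=h^0(\LL'_{\p_0})-(r+1)\beta$ and $h^1(\hat{\bar{\LL}}'_{\p_0})=h^1(\hat{\LL}'_{\p_0})+\beta$, and then reads off the codimension $r\beta=(r+1)\beta-\beta$ of $H^0(R_0,\bar{\RR}'_{\p_0})$ in $H^0(R_0,\RR'_{\p_0})$ from a commutative diagram of the two restriction sequences, i.e.\ a snake-lemma chase through the $H^1$'s of the kernel systems. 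You avoid the $H^1$ bookkeeping entirely: the injectivity of restriction (from $h^0(\hat{\LL}'_{\p_0})=0$) identifies $\bar{\RR}'_{\p_0}$ with the sections of $\LL'_{\p_0}$ whose trace is singular at the $q_j$; the local normal-derivative count shows this space exceeds $H^0(\bar{\LL}'_{\p_0})$ by at most $\beta$; the hypothesis $h^1(\bar{\LL}'_{\p_0})=0$ pins $h^0(\bar{\LL}'_{\p_0})$ exactly at $h^0(\LL'_{\p_0})-(r+1)\beta$; and squeezing against the trivial lower bound $\dim(\RR'_{\p_0})-r\beta$ forces equality. The $\beta$ you lose to the normal derivatives is precisely the paper's jump $h^1(\hat{\bar{\LL}}'_{\p_0})-h^1(\hat{\LL}'_{\p_0})=\beta$, so the arithmetic is identical even though the packaging is not. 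Two small merits of your route are worth noting: it works throughout with the image system $\RR'_{\p_0}$, which is what the statement and its later application to the matching system actually concern, whereas the rows of the paper's diagram, read literally, are exact only for the complete restricted sheaves on $R_0$; and your observation that the hypothesis $h^1(\p_0,\hat{\LL}'_{\p_0})>0$ is never used is accurate --- it only records the circumstance (an incomplete restricted system) in which the lemma is needed at all.
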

\begin{proof}
Restricting $\bar{\LL}'_{\p_0}$ to $R_0\cong\p^{r-1}$, we get the following  exact sequence:
$$
0\rightarrow\hat{\bar{\LL}}'_{\p_0}:=\LL_{r,d-2}(2^{n-b},1^{\beta})\rightarrow\bar{\LL}'_{\p_0}\rightarrow\bar{\RR}'_{\p_0}
$$
Notice that
 $h^0(\p_0,\hat{\bar{\LL}}'_{\p_0})=h^0(\p_0, \hat{\LL}'_{\p_0})=0$,  $h^0(\p_0,\bar{\LL}'_{\p_0})=h^1(\p_0, \LL'_{\p_0})-(r+1)\beta$ and that
 $h^1(\p_0,\hat{\bar{\LL}}'_{\p_0})=h^1(\p_0, \hat{\LL}'_{\p_0})+\beta$.
We have the following commutative diagram.
\begin{displaymath}
\xymatrix{ 
 &0 &0 & & & \\
 & K^{(r+1)\beta} \ar[u]& V \ar[u] &0 & & \\
  & & & & & \\
0 \ar[r]   & H^0(\p_0,\LL'_{\p_0}) \ar[r] \ar[uu] & H^0(R_0,\RR'_{\p_0}) \ar[r] \ar[uu]  & H^1(\p_0,\hat{\LL}'_{\p_0})  \ar[r] \ar[uu] & 0 & \\
0 \ar[r]   & H^0(\p_0,\bar{\LL}'_{\p_0}) \ar[r] \ar@{^{(}->}[u] & H^0(R_0,\bar{\RR}'_{\p_0}) \ar[r] \ar@{^{(}->}[u] & H^1(\p_0,\hat{\bar{\LL}}'_{\p_0})  \ar[r] \ar[u] & 0  & \\
  &0 \ar[u]  & 0 \ar[u]  & K^{\beta}\ar@{^{(}->}[u] 
  \ar `^u/10pt[rr] `^l/10pt[uuu] `_u/10pt[lll] `/10pt[uuuull] [lluuuu]
  &  & \\
 & & &0\ar[u] & &
 }
\end{displaymath}
It follows that $\dim (V)=r\beta$.
Hence $H^0(R_0,\bar{\RR}'_{\p_0})$ has codimension equal to $\dim( V)=r\beta$ in  $H^0(R_0,\RR'_{\p_0})$ and, at the level of linear systems, $\dim(\bar{\RR}'_{\p_0})=\dim({\RR'}_{\p_0})-r\beta$. Therefore the $\beta$ nodes impose $r\beta$ linearly independent conditions to $\bar{\RR}'_{\p_0}$ and
this concludes the proof.
\end{proof}

\begin{cor}
In the setting of above, the $\beta$ nodes specialized on $R_0$ give linearly independent conditions to the matching system ${\RR'}_{\p_0}^m$.
\end{cor}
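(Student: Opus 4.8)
The plan is to obtain this corollary as a direct consequence of the Transversality Lemma~II (Lemma~\ref{transversality 2}), which already shows that the $\beta$ double points $q_1,\dots,q_\beta$ on $R_0$ impose the full $r\beta$ independent conditions on the \emph{whole} restricted system $\RR'_{\p_0}$, i.e. $\dim(\bar\RR'_{\p_0})=\dim(\RR'_{\p_0})-r\beta$. Since the matching conditions imposed by $\RR'_{\F_0}\subseteq\LL_{r-1,d-1}(2^\beta)$ already force a double point at each $q_i$, one has the chain ${\RR'}_{\p_0}^m\subseteq\bar\RR'_{\p_0}\subseteq\RR'_{\p_0}$. Writing $\mathscr{S}$ for the subsystem of $\RR'_{\p_0}$ cut out by the \emph{residual} matching conditions alone (those defining $\RR'_{\F_0}$ inside $\LL_{r-1,d-1}(2^\beta)$, supported on $R_0$ away from the $q_i$), so that ${\RR'}_{\p_0}^m=\mathscr{S}(2^\beta)$, the assertion of the corollary is exactly that the $\beta$ nodes drop $\mathscr{S}$ by the full amount: $\dim({\RR'}_{\p_0}^m)=\dim(\mathscr{S})-r\beta$, equivalently that the evaluation map from $H^0(R_0,\mathscr{S})$ to the $r\beta$-dimensional jet space at $q_1,\dots,q_\beta$ is surjective.

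To prove this I would run the commutative-diagram argument of Lemma~\ref{transversality 2} again, but intersecting everything with the matching conditions: replace $\LL'_{\p_0}$, $\hat\LL'_{\p_0}$, $\bar\LL'_{\p_0}$ throughout by their matching-subsystems ${\LL'}_{\p_0}^m$, ${\hat\LL'}_{\p_0}^m=\hat\LL'_{\p_0}$, ${\bar\LL'}_{\p_0}^m$, restrict ${\bar\LL'}_{\p_0}^m$ to $R_0\cong\p^{r-1}$, compare with the restriction of ${\LL'}_{\p_0}^m$, and read off from the diagram that the analogue of the space $V$ (of node-functionals) still has dimension $r\beta$ and meets $H^0(R_0,\mathscr{S})$ transversally. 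The hypotheses of Lemma~\ref{transversality 2} ($h^0(\hat\LL'_{\p_0})=0$, $h^1(\bar\LL'_{\p_0})=0$, $\beta<r$) continue to supply the $h^1$-vanishings needed for the $m$-decorated systems, because $\LL'_{\p_0}$ is non-special and the residual matching data lies in general position with respect to the $q_i$; the bound $\beta<r$ again ensures the $n-b$ interior nodes, the $\beta$ boundary nodes, and hence the residual matching points remain in general position both in $\p_0$ and on $R_0$. The final dimension count then reproduces $\dim(\RR'_0)$ via formula~(\ref{formula l'_0}).

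The main obstacle is precisely this general-position input. Independence of the $\beta$ node conditions over the full system $\RR'_{\p_0}$ does \emph{not} formally descend to the subsystem $\mathscr{S}$: what one genuinely has to check is that the $r\beta$ node-functionals and the residual matching functionals are \emph{jointly} independent on $H^0(R_0,\RR'_{\p_0})$ — equivalently, that imposing the residual matching conditions on the non-special system $\bar\RR'_{\p_0}$ produces no more than the expected drop. This is exactly the kind of transversality that the diagram chase in Lemma~\ref{transversality 2} is engineered to handle, and I would reproduce that chase in the present setting; should it not go through directly, the fallback is a semicontinuity argument exhibiting a single admissible position of $q_1,\dots,q_\beta$, obtained by a further specialization of the boundary nodes onto linear subspaces of $R_0$ in the spirit of the differential Horace method.
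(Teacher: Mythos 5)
The paper offers no proof of this corollary at all: it is stated as an immediate consequence of Lemma \ref{transversality 2}, and the joint independence it asserts is then used silently in the later count $\dim(\RR')=\max\{-1,l'_{\p_0}-r\beta-(b-\beta)\}$. Your proposal follows the same route but correctly refuses to treat the deduction as formal: as you observe, independence of the $\beta$ node conditions on all of $\RR'_{\p_0}$ does not descend to the subsystem cut out by the residual matching conditions (the $b-\beta$ simple points, which are the traces on $R_0$ of the lines joining the $(d-1)$-point to the nodes left in $\F_0\setminus R_0$), so the real content is the joint independence of the $r\beta$ node functionals and those $b-\beta$ evaluation functionals on $H^0(R_0,\RR'_{\p_0})$. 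Your plan of rerunning the commutative diagram of Lemma \ref{transversality 2} with the decorated system $\LL_{r,d-1}(2^{n-b+\beta},1^{b-\beta})$ in place of $\bar{\LL}'_{\p_0}$ is the right way to establish this, and it supplies more than the paper does. What your writeup still leaves open, however, is the $h^1$-vanishing of that decorated system (and the $h^0$-vanishing of its kernel $\LL_{r,d-2}(2^{n-b},1^{b})$), which is a genuine non-speciality input of the same nature as the standing inductive hypotheses and should be stated as such rather than absorbed into ``general position''; moreover the $b-\beta$ simple points are not freely chosen but are determined by the nodes on $\F_0$, so their generality in $R_0$ has to be justified by noting that they vary generically as those nodes do (as in the proof of Lemma \ref{transversality}). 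With those two points made explicit, your argument closes a step that the paper leaves entirely unaddressed.
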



\subsection{Proof of Theorem \ref{A-H}, part II}
In this section we analyse the cases such that $b_0(r,d)\notin\Z$, i.e. the cases for which the first degeneration argument does not suffice.
Let $\LL'_{s}=\LL'_{\F_s}\times_{\RR'_{\F_s}\cap\RR'_{\p_s}}\LL'_{\p_s}$ be the system on the general fiber, which corresponds to the limit system of $\LL_{r,d}(2^n)$, with respect to the first degeneration. 
Let $b_0(r,d)$ and $\beta$ be as defined in (\ref{beta}); choose
$$
b=\frac{1}{r}{{r+d-1}\choose{r-1}}-\frac{\beta}{r}+\beta \in \Z.
$$

\begin{rmk}
Notice that if $r$ and $d$ are such that $\beta=0$, then we are in the case $b_0\in\Z$ and Proposition \ref{B in Z} applies, with no need of making a second degeneration.
\end{rmk}

From now on, we assume that the systems  $\LL_{r-1,d}(2^{b-\beta})$, $\LL_{r,d-1}(2^{n-b+\beta})$ and $\LL_{r,d-2}(2^{n-b})$ are non-special, for points in general position.
Consider the following exact sequence on  $\F_0$:
\begin{eqnarray}\label{F_0 complete series}
0 \rightarrow\hat{\LL}'_{\F_0}=\LL_{r,d}(d,2^{b-\beta},1^{\beta})\rightarrow \LL'_{\F_0}\rightarrow \RR'_{\F_0}\subseteq\LL_{r-1,d-1}(1^{b-\beta},2^{\beta})
\end{eqnarray}
The kernel system is empty and $h^1(\F_0, \hat{\LL}'_{\F_0})=0$. The system $\LL'_{\F_0}=\LL_{r,d}(d-1,2^b)$ is non-special by Proposition \ref{per LF non special k(r,d)}, in fact one checks that $b\leq k(r,d)$, for $r\geq 3$, $d\geq5$. Therefore $\LL'_{\F_0}$ is non-special in all  cases we are interested in.
Furthermore it cuts the complete series on $R_0$, namely
$
\RR'_{\F_0}=\LL_{r-1,d-1}(1^{b-\beta},2^{\beta}),
$
in fact the $b-\beta$ simple points (trace on $R_0$ of the lines through the $b-\beta$ double points and the $(d-1)$-point) are base points (see Lemma \ref{transversality}).
Moreover the system $\hat{\LL}'_{\p_0}=\LL_{r,d-2}(2^{n-b})$ is empty; indeed it is non-special by assumption and $\hat{v}'_{\p_0}\leq-1$.
 
The linear system  $\bar{\LL}'_{\p_0}=\LL_{r,d-1}(2^{n-b+\beta})\subset\LL'_{\p_0}$ on $\p_0$ is non-special by assumption, moreover $\dim(\bar{\LL}'_{\p_0})\geq  -1$, for $r\geq3$ and $d\geq5$. The hypotheses of Proposition \ref{transversality 2} are satisfied, hence $\RR'_{\p_0}$ and $\RR'_{\F_0}$ intersect transversally on $R_0$ and we get
$$
\dim(\RR')=\dim({\RR'}_{\p_0}^m)=\max\{-1,l'_{\p_0}-r\beta-(b-\beta)\}=e(\LL)
$$

\begin{prop}\label{B notin Z}
In the above notation, assume that the sistems $\LL_{r-1,d}(2^{b-\beta})(\cong\hat{\LL'}_{\F_0})$, $\bar{\LL}'_{\p_0}$ and $\hat{\LL}'_{\p_0}$ are non-special. Then the linear system $\LL_{r,d}(2^n)$ is non-special.
\end{prop}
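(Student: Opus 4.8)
The plan is to run the same scheme as in the proof of Proposition \ref{B in Z}, but now for the second degeneration $\LL'_0$ living on $Z_0=\F_0\cup\p_0$, and to read off the answer from the recursive formula (\ref{formula l'_0}). Concretely, I would show $\hat l'_{\p_0}=\hat l'_{\F_0}=-1$ and $\dim(\RR'_0)=e(\LL)$, so that $l'_0=\dim(\RR'_0)=e(\LL)$ by (\ref{formula l'_0}); since $l'_0\ge l_0\ge\dim(\LL_{r,d}(2^n))\ge e(\LL)$ by upper-semicontinuity through the two degenerations, equality holds throughout and $\LL=\LL_{r,d}(2^n)$ is non-special. In fact most of the ingredients have already been assembled in the discussion preceding the statement, so the proof amounts to collecting them.

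First I would dispose of the kernel systems. By hypothesis $\hat{\LL}'_{\p_0}=\LL_{r,d-2}(2^{n-b})$ is non-special, and a direct check of $\hat v'_{\p_0}$ (using $n\le n^+$ and the chosen $b$) gives $\hat v'_{\p_0}\le-1$, so it is empty, i.e. $\hat l'_{\p_0}=-1$. On the other component, $\hat{\LL}'_{\F_0}=\LL_{r,d}(d,2^{b-\beta},1^{\beta})\cong\LL_{r-1,d}(2^{b-\beta},1^{\beta})$ by Lemma \ref{system capLF}; since $\LL_{r-1,d}(2^{b-\beta})$ is non-special by hypothesis with virtual dimension $\le-1$ — this negativity is exactly what the choice $b=\frac{1}{r}{{r+d-1}\choose{r-1}}-\frac{\beta}{r}+\beta$ encodes — imposing $\beta$ extra general simple points leaves it empty, so $\hat l'_{\F_0}=-1$ and $h^1(\F_0,\hat{\LL}'_{\F_0})=0$.

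Next I would analyse the restricted systems on $R_0\cong\p^{r-1}$. The system $\LL'_{\F_0}=\LL_{r,d}(d-1,2^b)$ is non-special by Proposition \ref{per LF non special k(r,d)}, once one verifies $b\le k(r,d)$ for $r\ge 3$, $d\ge5$; arguing as in Lemma \ref{transversality}, the $b-\beta$ simple points on $R_0$ — the traces of the lines joining the $(d-1)$-point to the $b-\beta$ double points of $\F_0$ — are forced base points, hence $\LL'_{\F_0}$ cuts the complete series $\RR'_{\F_0}=\LL_{r-1,d-1}(1^{b-\beta},2^{\beta})$ on $R_0$. Transversality of $\RR'_{\p_0}$ with $\RR'_{\F_0}$ inside $|\OO_{\p^{r-1}}(d-1)|$ is the real obstacle, since $\RR'_{\F_0}$ is not complete and the easy criterion of Section \ref{Ideg} does not apply; here I would invoke the Transversality Lemma II (Lemma \ref{transversality 2}) after checking its hypotheses — $h^0(\p_0,\hat{\LL}'_{\p_0})=0$ and $h^1(\p_0,\hat{\LL}'_{\p_0})>0$ from the emptiness and non-speciality of $\hat{\LL}'_{\p_0}$; $h^1(\p_0,\bar{\LL}'_{\p_0})=0$ being precisely the assumed non-speciality of $\bar{\LL}'_{\p_0}=\LL_{r,d-1}(2^{n-b+\beta})$ (one also records $\dim(\bar{\LL}'_{\p_0})\ge-1$ for $r\ge3$, $d\ge5$, and $\beta<r$, so the $n-b+\beta$ nodes are in general position in $\p_0$); and $\beta<r$ from (\ref{beta}). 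Lemma \ref{transversality 2} and its corollary then give that the $\beta$ nodes on $R_0$ impose $r\beta$ independent conditions on the matching system, so $\dim(\RR'_0)=\dim({\RR'}_{\p_0}^m)=\max\{-1,\,l'_{\p_0}-r\beta-(b-\beta)\}$.

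It then remains to check numerically that $l'_{\p_0}-r\beta-(b-\beta)=v(\LL_{r,d}(2^n))$: expanding $l'_{\p_0}=\dim(\LL_{r,d-1}(2^{n-b}))$ via its non-speciality and substituting the chosen value of $b$, the identity falls out after a manipulation of binomial coefficients, so $\dim(\RR'_0)=e(\LL)$. Feeding $\hat l'_{\p_0}=\hat l'_{\F_0}=-1$ and this value into (\ref{formula l'_0}) gives $l'_0=\dim(\RR'_0)=e(\LL)$; combined with the semicontinuity chain $l'_0\ge l_0\ge\dim(\LL_{r,d}(2^n))\ge e(\LL)$, this forces $\dim(\LL_{r,d}(2^n))=e(\LL)$, i.e. non-speciality. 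The genuine difficulty, I expect, is concentrated in the transversality step — in verifying that the hypotheses of Lemma \ref{transversality 2} are met throughout the relevant range $n^-\le n\le n^+$, $r\ge3$, $d\ge5$ — while the rest reduces to arithmetic bookkeeping with binomial coefficients.
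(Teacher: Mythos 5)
Your proposal is correct and follows essentially the same route as the paper, whose proof simply collects the ingredients assembled in the discussion preceding the statement (emptiness of both kernels, non-speciality of $\LL'_{\F_0}$ via Proposition \ref{per LF non special k(r,d)}, completeness of $\RR'_{\F_0}$, transversality via Lemma \ref{transversality 2}) and feeds them into formula (\ref{formula l'_0}). One small slip worth noting: the virtual dimension of $\LL_{r-1,d}(2^{b-\beta})$ is $\beta-1\geq 0$ rather than $\leq -1$ (that is exactly why the $\beta$ extra simple points in $\hat{\LL}'_{\F_0}\cong\LL_{r-1,d}(2^{b-\beta},1^{\beta})$ are needed to reach dimension $-1$), but your conclusion $\hat{l}'_{\F_0}=-1$ still stands.
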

\begin{proof}
Following the argument of this section we get $l'_0=\dim(\RR')+\hat{l}'_{\p_0}+\hat{l}'_{\F_0}+2=\dim(\RR')=e(\LL)$. 
\end{proof}

Putting together Propostion \ref{B in Z} and Proposition \ref{B notin Z}, the proof  of Theorem \ref{A-H} for $d\geq4$ is now complete.


\section{Cubics}
\label{cubics}

The techniques introduced in the previous sections do not work in the case of cubics, because the limiting system on the exceptional component $\p$ of the central fiber of a $(1,b)$-degeneration is a linear system of quadrics with nodes which is special. We will prove non-speciality of $\LL_{r,3}(2^n)$, for $r\geq3$, $r\neq4$ by induction on $r$, with a different  degeneration argument.\\
The starting point is the linear system $\LL_{3,3}(2^5)$ of cubic surfaces of $\p^3$, which is empty as expected. Indeed, if we restrict it to a plane $\pi$ and if we specialize three nodes on it, 
we get the following sequence:
$$
0\rightarrow\LL_{3,2}(2^2,1^3)\rightarrow\LL_{3,3}(2^5)\rightarrow\LL_{3,3}(2^5)_{|\pi}\subset \LL_{2,3}(2^3).
$$
If a cubic has two double points,  it must vanish identically on the line joining them. This line meets $\pi$ at a point, so
$
\LL_{3,3}(2^5)_{|\pi}\subseteq \LL_{2,3}(2^3,1)=\emptyset.
$
Moreover the kernel $\LL_{3,2}(2^2,1^3)$ is empty, and this concludes the proof in the case of $\p^3$.

We will study the linear system $\LL_{r,3}(2^n)$, for $r\geq5$. Let $l \in\Z$ such that $r=3k+l$, with $k\in\{0,1,2\}$. Notice that $n^-(r,3)$ is an integer if and only if $k=0,1$ and define
$$
\gamma(r):=\begin{cases}
0& \textrm{if } r\equiv 0,1\pmod 3\\
l+1& \textrm{if } r\equiv 2\pmod3.
\end{cases}
$$
We will prove the following.
\begin{thm}
\label{AH 3}
The linear system $\LL:=\LL_{r,3}(2^{n^-(r,3)},1^{\gamma(r)})$ is empty, for $r\geq 5$. 
\end{thm}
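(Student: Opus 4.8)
The plan is to prove Theorem \ref{AH 3} by induction on $r$ using a degeneration to a reducible variety $X_0 = \p \cup \F$ of the type constructed in Section \ref{Ideg}, or rather the variant in which one blows up a codimension-three linear subspace $L \cong \p^{r-3}$ of $\p^r$ rather than a point --- this is the cubic-specific construction alluded to in the introduction and in the opening of this section. The base cases $r=3$ and $r=4$ are already disposed of: for $r=3$ one has the explicit computation just given ($\LL_{3,3}(2^5)=\emptyset$), and $r=4$ is excluded by hypothesis (it is the genuinely special case $\LL_{4,3}(2^7)$). So we must run the induction from $r$ to $r+3$, or directly express $\LL_{r,3}(2^{n^-(r,3)},1^{\gamma(r)})$ in terms of a lower-dimensional cubic system whose emptiness is the inductive hypothesis.

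The key steps, in order, are as follows. First I would set up the $(1,b)$-degeneration (or its $L$-blowup analogue) and specialize $b$ of the $n^-(r,3)$ nodes onto $\F$ and the remaining $n^-(r,3)-b$ onto $\p$, choosing $b$ so that the two kernel systems $\hat\LL_\p$ and $\hat\LL_\F$ and the intersection $\RR = \RR_\p \cap \RR_\F$ of the restricted systems on $R$ are all empty; by Formula (\ref{formula l_0}) this forces $l_0 = -1$, and then upper-semicontinuity of $\dim$ gives $\dim \LL = -1$. Second, I would use the fundamental geometric fact exploited repeatedly in the excerpt: a cubic with two assigned double points vanishes identically on the line joining them, so when one restricts the cubics on $\p$ to the hyperplane $R$, the traces of all $\binom{n-b}{2}$ lines through pairs of nodes become base points of $\RR_\p$; one must check (as in the Transversality Proposition for quartics) that enough of these trace-points impose independent conditions to kill $\RR_\p$, hence $\RR = \emptyset$. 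Third, the system $\LL_\F$ on the exceptional component is, after the $L$-blowup, again a cubic system on a projective space of dimension $r-1$ (or $r-3$ after a further reduction) with nodes, plus the base conditions coming from $R$; I would identify $\hat\LL_\F$ with a lower-dimensional cubic system $\LL_{r',3}(2^{b'},1^{\gamma'})$ and invoke either the inductive hypothesis or the already-established $r=3$ case to see it is empty, after verifying the arithmetic $v(\hat\LL_\F) \le -1$ and that the point-count matches $n^-$ and $\gamma$ through the definitions of $\gamma(r)$ and the residue $l$ with $r = 3k+l$. Fourth, $\hat\LL_\p = \LL_{r,1}(2^{n-b})$ is a linear system of hyperplanes with assigned double points, which is empty as soon as $n-b \ge 1$, so that is immediate.

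The main obstacle I expect is the bookkeeping that makes the three emptiness requirements ($\hat\LL_\p$, $\hat\LL_\F$, $\RR$ all empty) simultaneously satisfiable with a single admissible integer $b$, $0 \le b \le n^-(r,3)$ --- this is exactly the kind of arithmetic obstruction the paper warns about in Section \ref{IIdeg}, and it is presumably the reason the case split on $r \bmod 3$ and the correction term $\gamma(r)$ were introduced in the first place. Concretely, one has to show $\lfloor \frac{1}{r+1}\binom{r+3}{3}\rfloor$ (possibly adjusted by $\gamma$) decomposes as $b + (n^- - b)$ with the virtual dimensions of both kernels non-positive and with the number of independent base-point conditions on $R$ at least $\dim \RR_\p + \dim \RR_\F - \binom{r+1}{r-1} + 2$; the delicate part is counting how many of the $\binom{n-b}{2}$ line-traces on $R$ are genuinely independent conditions, which requires an auxiliary lemma (proved, as in the quartic case, by the observation that it suffices to check independence for \emph{quadrics} and then inducting). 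If instead the first degeneration cannot be balanced, one falls back on the second degeneration of Section \ref{IIdeg}, letting $\beta$ of the nodes on $\F$ approach $R$ and using Lemma \ref{transversality 2}; but I would expect the codimension-three blowup to be precisely the device that avoids needing this in the cubic case, which is why it is singled out in the introduction.
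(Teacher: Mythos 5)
Your proposal stays at the level of a plan, and where it becomes concrete it slides back into the point--blowup formulas that fail precisely for cubics. Writing $\hat{\LL}_{\p}=\LL_{r,1}(2^{n-b})$ and (implicitly) $\LL_{\p}=\LL_{r,2}(2^{n-b})$ puts you in the $(1,b)$-degeneration of Section \ref{Ideg}, where the component system on $\p$ is a system of quadrics with nodes: these are special for $n-b\leq r$ and empty for $n-b\geq r+1$, so neither the dimension of $\LL_{\p}$ nor that of $\RR_{\p}$ can be controlled, and your proposed mechanism for killing $\RR$ --- independence of the traces on $R$ of the $\binom{n-b}{2}$ lines through pairs of nodes --- cannot be made to work: for $n\approx\frac{1}{r+1}\binom{r+3}{3}$ the number of line--traces grows like $r^4$ while $\dim|\OO_{\p^{r-1}}(2)|$ grows like $r^2$, so almost none of these conditions are independent. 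This is exactly why the cubic case requires blowing up a codimension-three $L\cong\p^{r-3}$: the exceptional component $T$ is then a $\p^3$-bundle over $L$, the kernel there is $\hat{\LL}_T\cong\LL_{r-3,3}(2^{n^-(r-3,3)},1^{\gamma(r-3)})$, and the induction runs in steps of three with base cases $r=5,6,7$ (not $r=3,4$); moreover $r=7$ must be treated separately with a codimension-four blowup, since for $r=7$ the kernel on $T$ would be the nonempty exceptional system $\LL_{4,3}(2^7)$.

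The more serious omission is the matching-system analysis, which is the actual engine of the proof. Emptiness of the two kernels plus a vague transversality of the restricted systems is not what gets used: each node specialized on $T$ forces every section of $\LL_T$ to contain the whole fiber $f_i\cong\p^3$ through it, hence its trace $\sigma_i\cong\p^2$ on $Q$, and these conditions transfer to the $V$-side as the requirement that the cubics contain $L$ and be \emph{singular at $n^-(r-3,3)$ points of $L$}. One must then prove that
$\LL_V^m\cong\LL_{r,3}(\{L,2^{n^-(r-3,3)}\},2^{r+1},1^{\gamma(r)-\gamma(r-3)})$
is empty; this is the content of Proposition \ref{kernel systems}, and it in turn rests on the emptiness of the auxiliary systems $\mathscr{K}_1(r)$ and $\mathscr{K}_2(r)$ of cubics with nodes distributed on two, respectively three, general codimension-three subspaces (Propositions \ref{prop K_2} and \ref{kernel matching}), together with explicit computations for $r=5,6,7$. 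None of this structure --- the schemes $\{L,2^t\}$, the $\mathscr{K}_i$, the emptiness of $\hat{\LL}_V\cong\LL_{r,3}(2L,2^{r+1})$ from Proposition \ref{empty con 2L} --- appears in your outline, and without it the argument does not close.
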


\subsubsection{The degeneration construction}
We will perform a degeneration of $\p^r$ and simultaneously of $\LL$, by blowing up a $3$-codimensional subspace of the central fiber.
Let us consider the trivial family $\mathcal{Y}=\p^r\times\Delta\rightarrow\Delta$, where $\Delta$ is a complex disc with center at the origin. Let $Y_0$ be the central fiber.  Choose a general linear subspace $L\subset Y_0=\p^r$  of codimension $h$: 
$
\mathcal{N}_{L|\mathcal{Y}}=\OO_{L}(1)^{\oplus h}\oplus\OO_{L}
$ 
is the normal sheaf of $L$ in $\mathcal{Y}$. Blowing up $L$ in the  family, we obtain a new family $\mathcal{X}$, with maps $\pi_1:\mathcal{X}\rightarrow \p^r$ and $\pi_2:\mathcal{X}\rightarrow \Delta$ and a reducible central fiber $X_0$  which is the union of 
 the strict transform $V$ of $\p^r$, i.e. $\p^r$ blown up along $L$, and the exceptional divisor $T$, which is isomorphic to $\p(\mathcal{N}^{\ast}_{L|\mathcal{Y}})\cong\p(\OO_{\p^{r-h}}(1)^{\oplus h}\oplus\OO_{\p^{r-h}}(2))$.
This variety of dimension $r$ is a $\p^h$-bundle over $L\cong\p^{r-h}$ with the natural map $p:T\rightarrow L$.
The intersection of the two components of $X_0$  is a $(r-1)$-dimensional subvariety $Q$ of degree $h$:
$Q=\p(\OO_{\p^{r-h}}(1)^{\oplus h})  \cong \p^{r-h}\times \p^{h-1}$;
it is the exceptional divisor of the blow up of $L$ in the central fiber.
The Picard group of $V$ is generated by the hyperplane class $H_V$, which corresponds to the line bundle $\OO_{V}(1)$ pull back of $\OO_{\p^r}(1)$, and by the divisor $Q$.
The Picard group of $T$ is generated  by $\pi:=p^{\ast}(\OO_{L}(1))$ and by $Q$; so the $\OO(1)$-bundle of  $T\cong\p(\OO_{\p^{r-h}}(1)^{\oplus h}\oplus\OO_{\p^{r-h}}(2))$ is of the form $H_T=Q+2\pi$.

Now, consider the line bundle $\OO_{\mathcal{X}}(3)=\pi^{\ast}_1\OO_{\p^r}(3)$. It restricts to $\OO_{\p^r}(3)$ on the general fiber; while on the central one we have:
$$
|\OO_{\mathcal{X}}(3)_{|T}|=|3\pi| \textrm{ and } |\OO_{\mathcal{X}}(3)_{|V}|=|3H_V|.
$$
If $r\neq 7$, choose $h=3$ and twist by $-T$: on the general fiber we do not make any change, while on the special one we get:
$$
 |3\pi+Q| \textrm{ on }T \textrm{ and }|3H_V-Q| \textrm{ on } V.
$$
Consider the linear system of cubics on the general fiber  $\LL_t:=\LL=\LL_{r,3}(2^{n^-(r,3) },1^{\gamma(r)})$, 
which has virtual dimension $-1$, for every $r$. Specialize $n^-(r-3,3)$ nodes and $\gamma(r-3)$ simple points on the component $T$:
$$
\LL_{T}=|3\pi+Q-2^{n^-(r-3,3) }-1^{\gamma(r-3)}| \textrm{ and }
\LL_{V} =|3H-Q-2^{r+1}-1^{\gamma(r)-\gamma(r-3)},
$$ 
where $\gamma(r)-\gamma(r-3)\in\{0,1\}$.
The system $\LL_V$ is isomorphic to the linear system of cubic hypersurfaces of $\p^r$ containing a $3$-codimensional subspace $L$, being singular at $r+1$ general points, and passing through $\gamma(r)-\gamma(r-3)$ general  points; we will use the following notation: $\LL_V\cong\LL_{r,3}(L,2^{r+1},1^{\gamma(r-3)}).$\\
If we restrict the two linear systems to the intersection $Q$, we obtain as kernels the following systems:
\begin{align*}
\hat{\LL}_{T}&= |3\pi-2^{n^-(r-3,3)}-1^{\gamma(r-3)}|\cong \LL_{r-3,3}(2^{n^-(r-3,3)},1^{\gamma(r-3)}) \\
\hat{\LL}_V&=|3H-2Q-2^{r+1}-1^{\gamma(r)-\gamma(r-3)}|
\end{align*}
The motivation of this choice is that in this way the kernel $\hat{\LL}_T$ is known to be empty, applying induction from $r-3$ to $r$, if $r\neq7$.

\subsubsection{Emptyness of $\hat{\LL}_V$}
The kernel system of the component $V$ is isomorphic to the linear system of cubic hypersurfaces of $\p^r$ that are singular along a $3$-codimensional subspace $L$ and at $r+1$ general points, and with $\gamma(r)-\gamma(r-3)$ additional base points: $\hat{\LL}_V\cong\LL_{r,3}(2L,2^{r+1},1^{\gamma(r)-\gamma(r-3)}).$
Notice that  $\LL_{r,2}(2L, 2)$  has dimension $2$; this is easy and can be left to the reader.

\begin{prop}\label{empty con 2L}
The system $\LL_{r,3}(2L,2^{r+1})$ is empty, for $r\geq3$.
\end{prop}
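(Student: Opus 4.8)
The plan is to reduce the statement to a question about plane curves by writing a normal form for the members of $\LL_{r,3}(2L)$. Choose coordinates $x_0,\dots,x_r$ on $\p^r$ so that $L=\{x_0=x_1=x_2=0\}$. A cubic $F$ has multiplicity $\geq 2$ along $L$ exactly when every monomial of $F$ has degree $\geq 2$ in $x_0,x_1,x_2$, so $F=F_3(x_0,x_1,x_2)+\sum_{m=3}^{r}x_mQ_m(x_0,x_1,x_2)$ with $F_3$ a ternary cubic form and the $Q_m$ ternary quadratic forms. Fix general points $p_1,\dots,p_{r+1}\in\p^r$; since $L$ is general, $p_k\notin L$, so the projections $\bar p_k:=(p_{k,0}:p_{k,1}:p_{k,2})\in\p^2$ are $r+1$ general points of $\p^2$. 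By Euler's relation $F$ is singular at $p_k$ iff $\nabla F(p_k)=0$, which splits into $\partial_mF(p_k)=Q_m(\bar p_k)=0$ for $m=3,\dots,r$ and $\partial_iF(p_k)=\sum_{m\geq3}p_{k,m}\,\partial_iQ_m(\bar p_k)+\partial_iF_3(\bar p_k)=0$ for $i=0,1,2$.

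The first batch of conditions says that each of the $r-2$ conics $Q_m$ vanishes at the $r+1$ general points $\bar p_1,\dots,\bar p_{r+1}$. If all $Q_m$ vanish identically, then $F=F_3$ and the remaining conditions force $F_3\in\LL_{2,3}(2^{r+1})$, which is empty since plane cubics with nodes are non-special and $r+1\geq4$; so I may assume some $Q_m\not\equiv0$. Since $6$ general points of $\p^2$ lie on no conic, this forces $r\in\{3,4\}$. For $r=3$, $L$ is a general point, so $\LL_{3,3}(2L,2^4)=\LL_{3,3}(2^5)$ for five general points, which is empty by the argument at the beginning of Section~\ref{cubics}. Hence only $r=4$ needs a genuinely new argument.

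For $r=4$ the five general points lie on a unique smooth conic $C$, so $Q_m=c_mC$ and $F=\ell'(x_3,x_4)\,C(x_0,x_1,x_2)+F_3$ with $\ell'=c_3x_3+c_4x_4\not\equiv0$. Writing $t_k:=\ell'(p_{k,3},p_{k,4})$, the second batch of conditions reads $\nabla F_3(\bar p_k)=-t_k\,\nabla C(\bar p_k)$; applying Euler to $F_3$ together with $C(\bar p_k)=0$ gives $F_3(\bar p_k)=0$, so $\{F_3=0\}$ is a plane cubic through the five points. The crucial step — and the main obstacle — is to show $C\mid F_3$: if not, Bézout gives $\sum_q i_q(F_3,C)=6$, whereas at each $\bar p_k$ either $\bar p_k\in\operatorname{Sing}(F_3)$, or $\nabla F_3(\bar p_k)\parallel\nabla C(\bar p_k)\neq0$ so the two curves share a tangent there; in both cases $i_{\bar p_k}(F_3,C)\geq2$, giving $10\leq6$, a contradiction. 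Therefore $F_3=C\cdot\ell$ for a linear form $\ell$ in $x_0,x_1,x_2$; comparing gradients at $\bar p_k$ yields $\ell(\bar p_k)=-t_k$ for every $k$, i.e. the linear form $\tilde\ell:=\ell(x_0,x_1,x_2)+\ell'(x_3,x_4)$ on $\p^4$ vanishes at the five general points $p_1,\dots,p_5$. These impose independent conditions on $|\OO_{\p^4}(1)|$, so $\tilde\ell=0$, forcing $\ell'=0$ — a contradiction. This shows $\LL_{4,3}(2L,2^5)=\emptyset$ and completes the proof. (An alternative route would be induction on $r$ via restriction to a general hyperplane $H\not\supseteq L$, for which the residual of $2L$ is again $2L$ and the trace is $2(L\cap H)$; specializing $r$ of the nodes onto $H$ makes the image system the case of $\p^{r-1}$, but one is then left to prove $\LL_{r,2}(2L,2,1^r)=\emptyset$, which needs a further induction and appears no shorter than the argument above.)
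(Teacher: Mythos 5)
Your proof is correct, but it takes a genuinely different route from the paper. The paper argues by induction on $r$: it restricts to a general hyperplane with $r$ of the nodes specialized onto it, obtaining the Castelnuovo-type sequence
$$
0\rightarrow\LL_{r,2}(2L,2,1^r)\rightarrow\LL_{r,3}(2L,2^{r+1})\rightarrow\LL_{r-1,3}(2L',2^r),
$$
with base case $\LL_{3,3}(2^5)=\emptyset$; the kernel is killed by the (stated but unproved) fact that $\dim\LL_{r,2}(2L,2)=2$, so that $r\geq3$ further general simple points empty it. You instead write the explicit normal form $F=F_3(x_0,x_1,x_2)+\sum_m x_mQ_m(x_0,x_1,x_2)$ for cubics in $I_L^2$ and translate the nodes into conditions on plane conics and cubics through the projected points. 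This disposes of all $r\geq5$ in one stroke (a nonzero conic cannot pass through six general points of $\p^2$), reduces $r=3$ to the known $\LL_{3,3}(2^5)=\emptyset$, and isolates $r=4$ as the only case needing real work, which you settle with a clean B\'ezout/tangency count along the unique conic through the five projected points followed by the independence of five general points on $|\OO_{\p^4}(1)|$. Your argument is self-contained and avoids both the specialization/semicontinuity step and the unproved auxiliary claim; indeed your normal form immediately yields that claim as well, since a quadric in $I_L^2$ is just a conic $Q(x_0,x_1,x_2)$ and imposing a node at a general point gives the two-dimensional system of line pairs through its projection. The paper's induction, on the other hand, is shorter to state and fits the uniform restriction-sequence machinery used throughout the section. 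Two small points worth making explicit in your write-up: the case $F_3\equiv0$ should be noted (it is covered by your final linear-form argument with $\ell=0$), and the emptiness of $\LL_{2,3}(2^4)$ is best justified elementarily (a cubic with four general nodes would contain the six lines joining them) rather than by appeal to non-speciality, to keep the cubic section free of circularity.
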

\begin{proof}
In the first case ($r=3$) the system is $\LL_{3,3}(2^5)$ that is empty. For $r\geq4$,  the statement follows by induction on $r$ and by the sequence
$$
0\rightarrow\LL_{r,2}(2L,2,1^r)\rightarrow\LL_{r,3}(2L,2^{r+1})\rightarrow\LL_{r-1,3}(2L',2^r)
$$
where $L'\cong\p^{r-4}$ is the intersection of $L$ with  the restricting hyperplane. 
\end{proof}

From this proposition in particular follows  the emptyness of $\hat{\LL}_V$, for $r\geq 5$, $r\neq7$.

\subsubsection{Matching systems}

Let $p_1,\dots,p_t$, with $t=n^-(r-3,3)$, be the nodes specialized on $T$. Each of them lies on a distinct fiber of the ruling of $T$: say $p_i\in f_i\cong \p^3$. This implies that each of the sections of $\LL_T$ must contain $f_1,\dots,f_t$. Therefore the sections of ${\LL_{T}}_{|Q}$ must contain $t$ distinct planes $\sigma_i={f_i}_{|Q}\cong \p^2$, each of them imposing $3$ linear conditions on it. \\
The sections of ${\LL_V}_{|Q}$ must agree with those of ${\LL_T}_{|Q}$. Define $\LL_{V}^m\subseteq\LL_{V}$ and $\hat{\LL}_{V}^m\subseteq\hat{\LL}_{V}$ to be the linear systems on $V$ defined by the matching conditions. Similarly, let $\LL_{T}^m\subseteq\LL_{T}$ and $\hat{\LL}_{T}^m\subseteq\hat{\LL}_{T}$ be the corresponding systems on the exceptional component. 
The system $\LL_{V}^m$ is  the linear system of cubic hypersurfaces of $\p^r$ which contain a linear subspace $L$ of codimension $3$ and  which are singular at $n^-(r,3)$ nodes, such that $n^-(r-3,3)$ of them are supported on $L$ and $r+1$ are general in $\p^r\setminus L$ and which pass through $\gamma(r)-\gamma(r-3)$ additional general points:
$$
\LL_{V}^m\cong \LL_{r,3}(\{L,2^{n^-(r-3,3)}\},2^{r+1},1^{\gamma(r)-\gamma(r-3)}),\ \ r\neq7.
$$
We will use the notation $\{L,2^t\}$ for the scheme given by a subspace $L$ and $t$ general nodes supported on it.
We will prove that the matching system $\LL_{V}^m$ is empty, for $r\geq5$ by induction  from $r-3$ to $r$, starting from the cases $r=5,6,7$. This proof is very similar to the one of M. C. Brambilla and G. Ottaviani in \cite{BO}, Section 5.
We need two preliminary results.

\begin{prop}\label{prop K_2}
The system   $\mathscr{K}_{2}(r):=\LL_{r,3}(\{L_1,2^3\},\{L_2,2^3\},\{L_3,2^3\})$, with $L_1,L_2,L_3\cong\p^{r-3}$ three general subspaces of $\p^r$, is empty for $r\geq 6$.
\end{prop}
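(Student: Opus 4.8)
The plan is to prove emptiness of $\mathscr{K}_2(r)=\LL_{r,3}(\{L_1,2^3\},\{L_2,2^3\},\{L_3,2^3\})$ by induction on $r$, starting from a base case $r=6$ and reducing $r$ to $r-1$ by a Castelnuovo restriction to a general hyperplane. First I would check the virtual dimension: a cubic in $\p^r$ has ${r+3\choose 3}$ coefficients, a condition ``contain $L_i\cong\p^{r-3}$'' together with ``singular at $3$ nodes on $L_i$'' should impose the right number of conditions to make the virtual dimension exactly $-1$ for $r\ge 6$ (this is the reason the count is set up this way, mirroring Brambilla--Ottaviani), so the expected answer is indeed that the system is empty. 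The base case $r=6$ should be handled directly: three general $\p^3$'s in $\p^6$, each carrying $3$ general nodes, and one argues either by a direct dimension count after fixing the geometry, or by specializing further (e.g. restricting to a hyperplane and landing in an already-known empty system on $\p^5$, or using that cubics containing a $\p^3$ and singular at enough points on it are very constrained).

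For the inductive step I would fix a general hyperplane $\pi\cong\p^{r-1}$ and use the standard exact sequence
$$
0\rightarrow \mathscr{K}\rightarrow \mathscr{K}_2(r)\rightarrow {\mathscr{K}_2(r)}_{|\pi},
$$
where $L_i':=L_i\cap\pi\cong\p^{r-4}$. The trace on $\pi$ lands inside $\LL_{r-1,3}(\{L_1',2^3\},\{L_2',2^3\},\{L_3',2^3\})=\mathscr{K}_2(r-1)$ — here one must be slightly careful that the three nodes on each $L_i$ can be taken general \emph{on} $L_i'\subseteq\pi$, which holds since $\pi$ is general and the $L_i$ are general, so that the traces of the nodes are general points of $L_i'$. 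By the inductive hypothesis $\mathscr{K}_2(r-1)=\emptyset$, hence the trace is empty, so $\mathscr{K}_2(r)=\mathscr{K}$, the kernel system of cubics in $\p^r$ that in addition \emph{contain} $\pi$ — but a cubic cannot contain a hyperplane and still be the whole system unless it is reducible as $\pi + (\text{quadric})$. Thus $\mathscr{K}\cong\LL_{r,2}(\{L_1,2^3\},\{L_2,2^3\},\{L_3,2^3\})$ with the residual scheme: each ``$\{L_i,2^3\}$'' residuates, modulo $\pi$, to the scheme $\{L_i, 2^{3}\}$ again or drops to simple points, and more importantly the resulting quadric system is easily seen to be empty for $r\ge 6$ by the same elementary argument used to compute $\dim\LL_{r,2}(2L,2)=2$: quadrics of $\p^r$ containing three general $\p^{r-3}$'s and with extra singularity conditions overdetermine a small linear space. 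Alternatively, and perhaps more cleanly, I would restrict once more to get down to a quadric system on a lower-dimensional space that is manifestly empty.

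The main obstacle I anticipate is the base case $r=6$ and, relatedly, keeping precise track of how the schemes $\{L_i,2^3\}$ behave under residuation with respect to $\pi$: the residual of $\{L_i,2^3\}$ with respect to a general hyperplane $\pi$ is the scheme $\{L_i,2^3\}$ with the $L_i$-condition dropped by one order along $\pi\cap L_i$ but the nodes intact, so one has to be honest about whether the kernel is $\LL_{r,2}(\{L_1,2^2\},\dots)$ or genuinely $\LL_{r,2}(\{L_i,2^3\},\dots)$ — getting this bookkeeping right is exactly the delicate point in the Brambilla--Ottaviani-style argument. For the base case, I expect one must exhibit the geometry explicitly: place $L_1,L_2,L_3\cong\p^3$ in general position in $\p^6$ (they are pairwise disjoint and span), observe that a cubic singular along three general points of a $\p^3$ must already be very degenerate, and conclude by a rank computation on the space of cubics, or by yet another Castelnuovo restriction bottoming out at a case already proved earlier (for instance the emptiness of $\LL_{r,3}(2L,2^{r+1})$ from Proposition \ref{empty con 2L}, or of $\LL_{r,3}(2^{n^-},1^\gamma)$ in low dimension). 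I would present the induction cleanly and relegate the $r=6$ verification to a short direct computation.
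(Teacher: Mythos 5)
Your plan coincides with the paper's proof: induction from $r-1$ to $r$ with the base case $r=6$ done by explicit computation, specializing the nine nodes onto $L_i'=L_i\cap\pi$ so that the trace system is $\mathscr{K}_2(r-1)$ (empty by induction) and the kernel is a quadric system contained in $\LL_{r,2}(L_1,L_2,L_3)$, which is empty because no quadric of $\p^r$ contains three general codimension-three subspaces. The bookkeeping point you were unsure of resolves simply: since the specialized nodes lie on $\pi$ their residuals are mere simple points and the residual of the condition ``contain $L_i$'' is again ``contain $L_i$'' (as $L_i\not\subseteq\pi$), so the kernel sits inside $\LL_{r,2}(L_1,L_2,L_3)=\emptyset$ and the exact form of the extra punctual conditions is irrelevant.
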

\begin{proof}
For $r=6$ it suffices to make an explicit computation. 
For $r\geq7$, we prove the statement by induction from $r-1$ to $r$. Choose a general hyperplane of $\p^r$: it intersects $L_i$ in a subspace $L'_i$ of dimension $r-4$, for $i=1,2,3$. Moreover specialize the nine nodes on it, three on each $L'_i$, and consider the following exact sequence
\begin{eqnarray}\label{seconda succ transv}
0\rightarrow\LL_{r,2}(L_1,L_2,L_3)\rightarrow\mathscr{K}_2(r)\rightarrow\mathscr{K}_2(r-1).
\end{eqnarray}
The kernel system is empty, indeed in $\p^r$ there are no quadric hypersurfaces vanishing along three general subspaces of codimension three. 
\end{proof}

\begin{prop}
\label{kernel matching}
Let $L_1,L_2\cong\p^{r-3}$ be general subspaces of $\p^r$. The linear  system $\mathscr{K}_1(r):=\LL_{r,3}(\{L_1,2^{r-2}\},\{L_2,2^{r-2}\},2^3)$  is empty for $r\geq3$, $r\neq4$.
\end{prop}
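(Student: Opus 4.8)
The plan is to prove this by induction on $r$, in the spirit of the emptiness proof for $\LL_V^m$ above (and of \cite{BO}, Section~5). The base of the induction is $r=3$: there $L_1,L_2$ are points and $\{L_i,2^1\}$ is simply a double point, so $\mathscr{K}_1(3)=\LL_{3,3}(2^5)$, which is empty by the discussion at the beginning of this section. The case $r=5$ --- and, should the arithmetic of the inductive step demand them, the cases $r=6$ and $r=7$ --- must be settled separately, by an explicit coordinate computation or by an ad hoc restriction, since the induction from $r-1$ to $r$ cannot reach $r=5$: it would require $\mathscr{K}_1(4)$, which is \emph{non-empty}; this is precisely the $\p^4$ exception of Alexander--Hirschowitz, and the reason $r=4$ is excluded from the statement.

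For the inductive step ($r\ge6$) I would degenerate $\p^r$ together with $\mathscr{K}_1(r)$, either by blowing up the codimension-$3$ subspace $L_1$ in the central fibre --- obtaining, as in Section~\ref{cubics}, a reducible central fibre $V\cup T$ with $V\cap T=Q$, twisted by $-T$ --- or, more elementarily, by restricting to a general hyperplane $\pi\cong\p^{r-1}$ and specialising part of the nodes onto $\pi$ and onto the traces $L_i':=L_i\cap\pi$. Either way one obtains a restriction exact sequence that splits $\mathscr{K}_1(r)$ into a degree-$3$ ``trace'' system and a degree-$2$ ``residual'' (kernel) system, and the specialisation has to be chosen so that \emph{both} are empty. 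For the trace I would arrange that it is a sub-system of $\mathscr{K}_1(r-1)$ (or of a system of $\mathscr{K}_2$-type), hence empty by the inductive hypothesis (resp.\ by Proposition~\ref{prop K_2}). For the residual I would use the following observation: a quadric that contains a linear space $L\cong\p^{r-3}$ and is singular at $\dim L+1=r-2$ general points of $L$ is automatically singular \emph{along} the whole of $L$ --- writing the quadric as $\sum_{j=1}^{3}\ell_j g_j$, with the $\ell_j$ defining $L$ and the $g_j$ linear, each restriction $g_j|_{L}$ is a linear form on $\p^{r-3}$ vanishing at $r-2$ general points, hence $\equiv 0$. This confines the residual quadric to $(\mathscr{I}_{L_1})^2$, a $6$-dimensional space of cones over $L_1$, and the remaining generic double/simple points then cut it down to nothing: after projection from $L_1$ this amounts to the (empty) system of conics in $\p^2$ through at least six general points. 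In the alternative configuration where two general codimension-$3$ spaces survive in the residual, one uses instead that quadrics through two such spaces form only a $9$-dimensional system, again emptied by imposing a few more singular points.

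The main obstacle --- and the reason the small cases resist being absorbed into the induction --- is the book-keeping of the specialisation: one must push exactly the right number of the $r-2$ double points on each $L_i$ onto $L_i'$ (to make the trace a sub-system of $\mathscr{K}_1(r-1)$) while keeping enough of them generic on $L_i$ (to make the residual singular along $L_i$, hence empty), and simultaneously avoid the degenerate phenomenon in which the residual quadric system is non-empty: if it contains a quadric $Q$, then $\pi\cup Q$ is a cubic lying in the \emph{specialised} $\mathscr{K}_1(r)$, so the specialised system is non-empty and the semicontinuity argument becomes vacuous. Reconciling these constraints uniformly in $r$ is exactly what isolates the exceptional values $r=5,6,7$ (besides the excluded $r=4$).
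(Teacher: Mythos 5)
Your base case $r=3$ and your diagnosis of why $r=4$ must be excluded agree with the paper, and the overall shape (induction plus a restriction sequence with empty trace and empty kernel) is right. But the inductive step you propose does not close, and you in fact point at the fatal tension yourself without resolving it: to make the trace on a hyperplane $\pi$ land inside $\mathscr{K}_1(r-1)$ you must push $r-3$ of the $r-2$ nodes of each $L_i$ onto $L_i'=L_i\cap\pi$, which leaves only \emph{one} node of each $L_i$ off $\pi$; your mechanism for emptying the residual quadric system, however, needs $r-2$ nodes to remain general on $L_i$ so as to force the quadric to be singular along all of $L_i$. These two requirements are mutually exclusive, and this is the heart of the proof, not a book-keeping detail. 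Worse, with the specialization that makes the trace work, the residual is (up to automatic conditions) $\LL_{r,2}(L_1,L_2,2_p,2_q,1^3)$ with one node $p\in L_1$, one node $q\in L_2$: writing a quadric through $L_1\cup L_2$ as $x_{[012]}^T A\, x_{[345]}$ with $A$ a $3\times3$ matrix, the node at $p$ imposes $A p_{[345]}=0$ (three conditions) and the node at $q$ imposes $A^T q_{[012]}=0$ (only two \emph{new} conditions), leaving a $4$-dimensional space on which the three simple points cut out a nonzero quadric. So the specialized system is nonempty and semicontinuity yields nothing.

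The paper avoids degree $2$ altogether by restricting to a third general codimension-$3$ subspace $L_3$ rather than to a hyperplane: the three free nodes are specialized onto $L_3$ and $r-5$ nodes of each $L_i$ onto $L_i\cap L_3\cong\p^{r-6}$, so the induction runs from $r-3$ to $r$ (base cases $r=3,5,7$ done explicitly, step covering $r=6$ and $r\geq 8$), the trace is $\mathscr{K}_1(r-3)$, and the kernel stays a \emph{cubic} system, namely $\mathscr{K}_2(r)=\LL_{r,3}(\{L_1,2^3\},\{L_2,2^3\},\{L_3,2^3\})$. That system is emptied by its own one-step induction (Proposition \ref{prop K_2}), whose quadric kernel $\LL_{r,2}(L_1,L_2,L_3)$ genuinely is empty — no quadric contains three general codimension-$3$ subspaces. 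This choice of a codimension-$3$ (rather than codimension-$1$) restriction is the missing idea in your argument; the degeneration-by-blowing-up-$L_1$ alternative you mention is not developed enough to assess.
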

\begin{proof}
We will prove the statement by induction on $r$, from $r-3$ to $r$, starting from the cases $r=3,5,7$.
For $r=3$, one has $\mathscr{K}_1(3)=\LL_{3,3}(2^5)$; this system is empty. 
For $r=5$ and $r=7$ it is an explicit computation. 
For $r=6$, $r\geq8$, we prove the statement exploiting the following restriction exact sequence:
$$
0\rightarrow \mathscr{K}_2(r)\rightarrow\mathscr{K}_1(r)\rightarrow\mathscr{K}_1(r-3).
$$
The kernel is empty by Proposition \ref{prop K_2}  and $\mathscr{K}_1(r-3)$ is empty by induction.

\end{proof}

\begin{prop}\label{kernel systems} 
Keep the same notation as above. Let $r\geq 5$. The matching linear system $\LL^m_V$ is empty.
\end{prop}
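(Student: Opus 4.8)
The plan is to prove that $\LL^m_V$ is empty by induction on $r$, with inductive step $r-3\to r$, in analogy with Propositions \ref{prop K_2} and \ref{kernel matching}; the base cases are $r=5,6,7$. For $r=5$ and $r=6$ I would write $\LL^m_V$ out from its definition — for instance when $r=5$, $\LL^m_V\cong\LL_{5,3}(\{L,2^{3}\},2^{6},1)$ with $L\cong\p^2$ — and check emptiness either by an explicit choice of coordinates, subspace and points, or by restricting along a short flag of hyperplanes until one lands in complete cubic systems that are manifestly empty. The value $r=7$ is excluded from the degeneration of Section \ref{cubics} with $h=3$, so there one must instead run the parallel construction obtained by blowing up a subspace of codimension $4$ and check emptiness of the associated level-$7$ matching system by the same kind of argument. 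I expect these base cases — and $r=7$ in particular — to be the main obstacle; once they are in place the inductive step is essentially formal.

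For the inductive step take $r\ge 8$, so that the statement is available at level $r-3$, and recall $\LL^m_V\cong\LL_{r,3}(\{L,2^{n^-(r-3,3)}\},2^{r+1},1^{\gamma(r)-\gamma(r-3)})$ with $L\cong\p^{r-3}$. I would fix a general linear subspace $M\cong\p^{r-3}$ of codimension $3$ in $\p^r$ and specialize part of the base scheme onto it: $n^-(r-6,3)$ of the nodes lying on $L$ to general points of $L\cap M\cong\p^{r-6}$, exactly $r-2=(r-3)+1$ of the $r+1$ general nodes to general points of $M$, and all $\gamma(r)-\gamma(r-3)$ simple points to general points of $M$. This last specialization is legitimate because $\gamma(r)-\gamma(r-3)=\gamma(r-3)-\gamma(r-6)$ (both $1$ if $r\equiv2\pmod3$ and both $0$ otherwise), and, together with $n^-(r,3)=n^-(r-3,3)+r+1$, these elementary identities make the decomposition of the base scheme dovetail across levels; by upper-semicontinuity it then suffices to prove emptiness for this special configuration. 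Consider the restriction sequence of $\LL^m_V$ to $M$,
$$
0\rightarrow\MM\rightarrow\LL^m_V\rightarrow{\LL^m_V}_{|M}.
$$
On one side, its image is contained in $\LL_{r-3,3}(\{L\cap M,2^{n^-(r-6,3)}\},2^{(r-3)+1},1^{\gamma(r-3)-\gamma(r-6)})$, which is $\LL^m_V$ at level $r-3$ and so is empty by the inductive hypothesis. On the other side, since every divisor of $\LL^m_V$ already contains $L$, the kernel $\MM$ is the system of cubics of $\p^r$ containing the two codimension-$3$ subspaces $L$ and $M$, singular at the $n^-(r-3,3)$ nodes on $L$, at the $r-2$ nodes on $M$ and at the $3$ remaining general nodes (the simple points, which lie on $M$, impose nothing on a cubic through $M$). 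Since $n^-(r-3,3)=\lfloor r(r-1)/6\rfloor\ge r-2$, equivalently $(r-3)(r-4)\ge0$, discarding $n^-(r-3,3)-(r-2)$ of the nodes on $L$ exhibits $\MM$ as a subsystem of $\mathscr{K}_1(r)$, which is empty by Proposition \ref{kernel matching} (and this is where Proposition \ref{prop K_2} enters, since it underlies Proposition \ref{kernel matching}). Hence $h^0(\LL^m_V)=0$, i.e.\ $\LL^m_V=\emptyset$, and the induction closes.

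The reason a codimension-$3$ subspace $L$ is built into the construction — rather than a hyperplane — is precisely this point: restricting to a second general codimension-$3$ subspace $M$ forces the kernel to contain two such subspaces, so that it is governed by the rigid systems $\mathscr{K}_1(r),\mathscr{K}_2(r)$ of Section \ref{cubics}, whose emptiness is purely combinatorial. Thus, base cases aside, the only genuinely new ingredients are the two numerical identities above together with $(r-3)(r-4)\ge0$; everything else is restriction-sequence bookkeeping, in the spirit of \cite{BO}.
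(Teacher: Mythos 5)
Your inductive step for $r\ge 8$ is, up to bookkeeping, the same as the paper's: restrict to a general $M\cong\p^{r-3}$, specialize $n^-(r-6,3)$ of the nodes supported on $L$ to $L\cap M$ and $r-2$ of the free nodes (together with the simple points) to $M$, observe that the image lands in the level-$(r-3)$ matching system, and that the subsystem of divisors containing $M$ sits inside $\mathscr{K}_1(r)$, which is empty by Proposition \ref{kernel matching}. Your numerical identities $n^-(r,3)=n^-(r-3,3)+r+1$, $\gamma(r)-\gamma(r-3)=\gamma(r-3)-\gamma(r-6)$ and $n^-(r-3,3)\ge r-2$ are correct and are indeed all the step requires.

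The gap is in the base cases, which you defer and which carry most of the actual content of the proof. Most seriously, your plan for $r=7$ targets the wrong object: you propose to verify emptiness of the matching system of the codimension-$4$ construction, $\LL_{7,3}(\{L_1,2^5\},2^{10})$ with $L_1\cong\p^3$, but what your own induction consumes at $r=10$ (and hence on the whole residue class $r\equiv1\pmod3$) is the codimension-$3$ matching system $\LL_{7,3}(\{L,2^7\},2^8)$ with $L\cong\p^4$ --- a different linear system. The codimension-$4$ construction is needed for a different purpose (to finish the theorem in $\p^7$ itself, where $\hat{\LL}_T\cong\LL_{4,3}(2^7)$ is nonempty), and does not substitute for this base case. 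Moreover the codimension-$3$ case at $r=7$ is not routine: the paper restricts to a hyperplane $\pi$ with $L\cap\pi\cong\p^3$ containing all but three of the nodes on $L$ and all but one of the free nodes, and the decisive point is that the three lines joining the remaining free node to the three nodes of $L$ off $\pi$ lie in the base locus, forcing three extra simple base points on the trace and making the restricted system empty; the kernel is then disposed of by an explicit computation with quadrics containing $L$. None of this is restriction-sequence bookkeeping. Similarly, $r=6$ is handled in the paper not by a flag of hyperplanes but by restricting to a general $\p^3$ meeting $L$ at the support of one of the nodes, with kernel again controlled by Proposition \ref{kernel matching}. Until these cases --- with $r=7$ in its codimension-$3$ form --- are actually established, your induction has no base on two of the three residue classes.
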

\begin{proof}
For $r=5$,  the matching system  is $\LL_{V}^m=\LL_{5,3}(\{L,2^3\},2^6,1)$. With an explicit computation, one can  see that it is empty. 
For $r=6$ the matching system is $\LL_V^m=\LL_{6,3}(\{L,2^5\},2^7)$. Restricting it to a general  $L_1\cong\p^3$ intersecting $L$ in the support $p$ of one of the nodes and specializing on it four general nodes, we get
$$
0\rightarrow\LL_{6,2}(\{L,2^4\},\{L_1,2^4\},2^3)\rightarrow \LL_{6,3}(\{L,2^5\},2^7)\rightarrow\LL_{3,3}(\{p,2\},2^4).
$$
The kernel is empty by Proposition \ref{kernel matching}, and the restricted system is $\LL_{3,3}(2^5)$ which is empty.
For $r=7$ the matching system is $\LL^m_V=\LL_{7,3}(\{L,2^7\},2^8)$, with $L\cong\p^4$. 
Let $p_1,\dots,p_7\in L$ and $q_1,\dots,q_8\in\p^r\setminus L$ be the supports of the fifteen nodes. Let $\pi$ be a hyperplane  such that $L\cap\pi=L'\cong\p^3$  and such that $p_4,\dots,p_7 \in L'$; moreover specialize on $\pi$ the points $q_2,\dots,q_8$:
$$
\LL^m_V  \stackrel{\phi}{\rightarrow}{\LL^m_V}_{|\pi}\subseteq \LL_{6,3}(\{L',2^4\},2^7).
$$
Notice that the line joining $q_1$ and $p_i$ is contained in all the sections of $\LL^m_V$ and it intersects $\pi$ at a point, for $i=1,2,3$. Therefore $
{\LL^m_V}_{|\pi}\subseteq\LL_{6,3}(\{L',2^4\},2^7,1^3).$
It is non-special as consequence of the previous point (case $r=6$), moreover it is empty, having virtual dimension equal to $-1$.  Furthermore the kernel of the restriction map $\phi$ is
$\LL_{7,2}(\{L,2^3\},2,1^7).$
It is easy to check that $\dim(\LL_{7,2}(\{L,2^3\},2))=6$, choosing for example $L=\{x_0=x_1=x_2=0\}$,
$p_1=[0,0,0,0,0,0,0,1]$, $p_2=[0,0,0,0,0,0,1,0]$,
$p_3=[0,0,0,0,0,1,0,0]$ and $q_1=[1,0,0,0,0,0,0,0]$.
Therefore, imposing seven further general base points, the resulting system is empty.
For $r\geq8$, the statement follows by induction restricting to a general $\p^{r-3}$ and making specializations of the points as follows:
$$
 \LL_{r,3}(\{L,2^{n^-(r-3,3)}\},2^{r+1},1^{\gamma(r)-\gamma(r-3)}) \rightarrow\LL_{r-3,3}(\{L',2^{n^-(r-6,3)}\},2^{r-2},1^{\gamma(r)-\gamma(r-3)}),
$$
where $L'=L\cap \p^{r-3}\cong\p^{r-6}$: the kernel is $\mathscr{K}_1(r)=\emptyset$.
\end{proof}

Finally, being $\LL^m_V=\hat{\LL}^m_T=\emptyset$, for $r\geq 5$, $r\neq 7$, then $\LL=\LL_t$ is empty.

\begin{rmk}
For $r=7$, the emptyness of the matching system does not suffice to conclude that the system of cubics of $\p^7$ with fifteen nodes is empty (and in particular non-special), because the kernel system $\hat{\LL}_T$ on the other component is not empty. Nevertheless this is crucial because it represents the starting point of the induction from $r-3$ to $r$, for $r\geq 10$, $r\equiv1\pmod3$; so we will analyse this case separately.
\end{rmk}

\subsubsection{Cubics in $\p^7$}

We will reproduce the same argument, but blowing up a subspace $L_1$ of codimension four, instead of three, in the central fiber of the trivial family $\p^7\times\Delta$. Let us denote by $T$ the exceptional component of the new special fiber, and with $V$ the strict transform, as above. Let $p:T\rightarrow L_1$ the natural map and $\pi:=p^\ast\mathcal{O}_{L_1}(1)$. Twist by $-T$ and consider the limit of the linear system of cubics of $\p^7$:
$$|3\pi+Q|  \textrm{ on } T \textrm{ and }
 |3H_V-Q| \textrm{ on } V,
$$
where $H_V$ is the pull-back of a hyperplane, and $Q$ is the exceptional divisor of the blow up.
Consider the system $\LL_{7,3}(2^{15})$ on the general fiber. To prove emptyness, we use the same trick as in the general case: we specialize the right number of points on the two components as follows: 
$$
\LL_T=|3\pi+Q-2^5|,\textrm{ and }
\LL_V=|3H_V-Q-2^{10}|\cong\LL_{7,3}(L_1,2^{10})
$$
The kernels of the restriction to $Q$ are
$$
\hat{\LL}_T\cong\LL_{3,3}(2^{5})=\emptyset,\textrm{ and }
\hat{\LL}_V=|3H_V-2Q-2^{10}|\cong\LL_{7,3}(2L_1,2^{10}).
$$

Each node specialized on $T$ selects a fiber of the ruling of $T$. Each fiber cuts a $\p^3$ at the intersection $Q$, which corresponds to a fiber of the ruling of $Q$. So, as in the general case, the matching system on $V$ is
$$
\LL_{V,7}^m\cong\LL_{7,3}(\{L_1,2^5\},2^{10})
$$
and it has virtual dimension $-1$. To prove its emptyness, we make two subsequent specialization of the general nodes, five on  $L_2$ and five on $L_3$, where $L_2,L_3\cong\p^3$ are general subspaces of $\p^7$, as we did for the general case:
$$
0\rightarrow  \mathcal{K}_1\rightarrow \LL_{V,7}^m \rightarrow \LL_{3,3}(2^5)\rightarrow 0
$$
where $\mathcal{K}_1:=\LL_{7,3}(\{L_1,2^5\},\{L_2,2^5\},2^5)$ and 
$$
0\rightarrow  \mathcal{K}_2\rightarrow \mathcal{K}_1\rightarrow \LL_{3,3}(2^5)\rightarrow 0
$$
where $\mathcal{K}_2:=\LL_{7,3}(\{L_1,2^5\},\{L_2,2^5\},\{L_3,2^5\})$. 
With an explicit computation we prove that $\mathcal{K}_2$ is empty, therefore $\mathcal{K}_1$ is empty too and this conludes the proof.

\section*{Acknowledgements} This paper is part of my Ph.D. thesis. I am extremely grateful to Professor Ciro Ciliberto who has supported me thoughout this work with his patience and very deep knowledge.


\end{document}